\documentclass{amsart}
\usepackage{stmaryrd,mathabx,xcolor}

\usepackage[pagebackref,colorlinks=true
]{hyperref}  

%
%
 \newtheorem{thm}{Theorem}[section]
 \newtheorem{cor}[thm]{Corollary}
 \newtheorem{lem}[thm]{Lemma}
 \newtheorem{prop}[thm]{Proposition}
 \theoremstyle{definition}
 
 \theoremstyle{remark}

 \numberwithin{equation}{section}

\newcommand{\W}{\mathcal{W}}
\newcommand{\R}{\mathbb{R}}

\newcommand{\n}{\nabla}
\newcommand{\tr}{\mathrm{tr}}
\newcommand{\tg}{\widetilde{g}}
\newcommand{\tn}{\widetilde\nabla}
\newcommand{\tR}{\widetilde{R}}
\newcommand{\tF}{\widetilde{F}}

\newcommand{\tQ}{\widetilde{Q}}

\newcommand{\tB}{\widetilde{B}}
\newcommand{\tP}{\widetilde{\Phi}}
\newcommand{\tN}{\widetilde{N}}
\newcommand{\ta}{\theta}
\newcommand{\lm}{\lambda}
\newcommand{\e}{\varepsilon}
\newcommand{\D}{\mathrm{d}}

\newcommand{\norm}[1]{\Vert#1\Vert ^2}
\newcommand{\nP}{\norm{\nabla P}}
\newcommand{\tnP}{\norm{\widetilde{\nabla} P}}
\newcommand{\wh}[1]{\widehat{#1}}

\newcommand{\thmref}[1]{Theorem~\ref{#1}}
\newcommand{\propref}[1]{Proposition~\ref{#1}}
\newcommand{\lemref}[1]{Lemma~\ref{#1}}

\begin{document}
%
%
%
%
%
\title[Invariants on Almost Paracomplex Pseudo-Riemannian Manifolds]
 {Invariant Tensors under the Twin Interchange\\
of the Pairs of the Associated Metrics
on Almost Paracomplex Pseudo-Riemannian Manifolds}
\author[M. Manev]{Mancho Manev}

\address{%
  Department of Algebra and Geometry \\
  Faculty of Mathematics and Informatics \\
  University of Plovdiv Paisii Hilendarski  \\
  24 Tzar Asen St \\
  4000 Plovdiv, Bulgaria\\
	\& \\
	Department of Medical Informatics, Biostatistics and E-Learning\\
	Faculty of Public Health\\
	Medical University of Plovdiv \\
	15A 	Vasil Aprilov Blvd\\
	4002 Plovdiv, Bulgaria
}

\email{mmanev@uni-plovdiv.bg}

\subjclass[2010]{Primary 53C15; Secondary 53C25}

\keywords{Invariant tensor, affine connection, almost paracomplex manifold, pseudo-Riemannian metric}


\begin{abstract}
The object of study are almost paracomplex pseudo-Riemannian manifolds with a pair of metrics associated each other by the almost paracomplex structure. A torsion-free connection and tensors with geometric interpretation are found which are invariant under the twin interchange, i.e. the swap of the counterparts of the pair of associated metrics and the corresponding Levi-Civita connections. A Lie group depending on two real parameters is constructed as an example of a 4-dimensional manifold of the studied type and the mentioned invariant objects are found in an explicit form.
\end{abstract}

\maketitle

\section*{Introduction}

Manifolds with almost product structure and Riemannian metric are well known \cite{Nav}. Usually, the almost product structure acts as an isometry with respect to the metric, i.e. it is said that the metric is \emph{compatible} with the structure. A special and remarkable case is when the almost product structure is traceless and then it is called an \emph{almost paracomplex structure}. In this case, the eigenvalues $+1$ and $-1$ of the structure have one and the same multiplicity, thus the dimension of such a manifold is even. An almost paracomplex manifold is a counterpart of an almost complex manifold. The compatible metric with an almost complex structure is a Hermitian metric. The requirement that the metric be Riemannian on an almost paracomplex manifold is not necessarily and thus we suppose here that the metric is pseudo-Riemannian.

The associated $(0,2)$-tensor of a Hermitian metric is a 2-form while the associated $(0,2)$-tensor of any compatible metric on almost paracomplex manifold is also a compatible metric. So, in this case, we dispose of a pair of mutually associated compatible metrics with respect to the almost paracomplex structure, known also as twin metrics. Such \emph{almost paracomplex manifolds} are studied in the latter three decades by a lot of authors (e.g. 
\cite{Sta87},  \cite{Mih87}, \cite{StaGri92}, 
\cite{BoFeFrVo99}, \cite{Pri05}, \cite{Dok05},
\cite{Mek08}, \cite{DGriMek11}, \cite{MekMan12}, \cite{DGri12}), 
including under the name \emph{Riemannian almost product manifolds}.

An interesting problem on almost paracomplex (pseudo-)Riemannian manifolds is the presence of tensors with some geometric interpretation which are invariant or anti-invariant under the so-called \emph{twin interchange}. This is the swap of the counterparts of the pair of compatible metrics and their Levi-Civita connections. The aim of the present work is to solve this problem in the general case and to illustrate the invariant objects by example from a significant class of the considered manifolds.
Invariant connection and invariant tensors under twin interchange on Riemannian almost product manifolds with nonintegrable structure are found in \cite{Mek08}. 
A similar investigation for almost complex manifolds with Norden metrics is given in \cite{Man49}. An explicit example of a Riemannian almost product manifold from the main class is proposed in \cite{DGri12}. Similar investigations on Lie groups with additional tensor structures are made in \cite{GriManMek06} and \cite{HMMek15}.

The present paper is organized as follows. Section~1 contains some preliminaries on the considered type of manifolds. In Section~2 we present the main results on the topic about the invariant objects and their vanishing. In Section~3 we give a specialization of the considered tensors when the manifolds under study belongs to the main class. In Section~4 we construct an example of the studied manifolds of dimension 4 by an appropriate Lie algebra depending on 2 real parameters. Then we compute the basic components of the invariant objects which are found in the previous sections.

\section{Almost paracomplex pseudo-Riemannian manifolds}\label{sec_1}



Let $(M,P,g)$ be an almost paracomplex pseudo-Riemannian manifold.
Consequently,  $P$ is an almost paracomplex structure, i.e.
\begin{equation}\label{str-P}
P^2x=x,\qquad \tr P = 0,
\end{equation}
and $g$ is
a pseudo-Riemannian metric on $M$ compatible with $P$, i.e.
\begin{equation}\label{str-g}
g(Px,Py)=g(x,y).
\end{equation}
Here and further, $x$, $y$, $z$, $w$
will stand for arbitrary differentiable vector fields on $M$
or vectors in $T_pM$, $p\in M$.

In the present work, $(M,P,g)$ is called briefly a \emph{$\W$-manifold} and $g$ -- a \emph{$P$-metric}.

Necessarily, the dimension of this manifold is even, i.e. $\dim M=2n$, $n\in\mathbb{N}$.
Then the signature of $g$ is of the type $(2q,2n-2q)$ for some fixed $q\in\{1,2,\dots,n\}$.

Let $\{e_i\}$ ($i=1,2,\dots,2n$) be a basis of $T_pM$ at a point $p$ of $M$ such that
\begin{equation*}\label{ei}
\begin{array}{ll}
g(e_k,e_k)=1, \quad & k\in\{1,2,\dots,2q\},\\[4pt]
g(e_l,e_l)=-1, \quad & l\in\{2q+1,2q+2,\dots,2n\},\\[4pt]
g(e_i,e_j)=0, \quad  & i\neq j,\quad i,j\in\{1,2,\dots,2n\}.
\end{array}
\end{equation*}
An almost paracomplex structure $P$ is defined as follows
\begin{equation*}\label{Pei}
\begin{array}{lll}
Pe_{2k-1}=e_{2k},\quad & Pe_{2k}=e_{2k-1}, \quad & k\in\{1,2,\dots,q\};\\[4pt]
Pe_{2l-1}=e_{2l},\quad   & Pe_{2l}=e_{2l-1}, \quad   & l\in\{q+1,q+2,\dots,n\}.
\end{array}
\end{equation*}

Obviously, this basis is compatible with $P$ because \eqref{str-g} is satisfied.
Let us call the introduced basis an \emph{adapted $P$-basis} of the considered $\W$-manifold.

Let us consider the basis $\{a_i\}$ ($i=1,2,\dots,2n$) defined by the basis $\{e_i\}$ as follows
\begin{equation}\label{eigen}
\begin{array}{ll}
a_{2k-1}=\frac{1}{\sqrt{2}}(e_{2k-1}-e_{2k}),\quad & a_{2k}=\frac{1}{\sqrt{2}}(e_{2k-1}+e_{2k}), \\[4pt]
a_{2l-1}=\frac{1}{\sqrt{2}}(e_{2l-1}-e_{2l}),\quad & a_{2l}=\frac{1}{\sqrt{2}}(e_{2l-1}+e_{2l})
\end{array}
\end{equation}
for $k\in\{1,2,\dots,q\}$, $l\in\{q+1,q+2,\dots,n\}$.
These vectors satisfy the following equalities
\begin{equation*}\label{eigenP}
\begin{array}{llll}
Pa_{2k-1}=-a_{2k-1},\quad & Pa_{2k}=a_{2k}, \quad & 
Pa_{2l-1}=-a_{2l-1},\quad & Pa_{2l}=a_{2l}
\end{array}
\end{equation*}
and therefore $\{a_i\}$ is an \emph{eigenbasis} with respect to $P$ of $T_pM$.

For an arbitrary $\W$-manifold $(M,P,g)$, there exists an associated metric $\tg$ of $g$ given by
\begin{equation}\label{tgP}
\tg(x,y)=g(x,Py).
\end{equation}
It is also a $P$-metric since obviously the condition $\tg(Px,Py)=\tg(x,y)$ is satisfied.
By virtue of the following equalities for the vectors of $\{a_i\}$ from \eqref{eigen}
\begin{equation*}\label{tg-eigenP}
\begin{array}{ll}
\tg(a_{2k-1},a_{2k-1})=-1,\quad & \tg(a_{2k},a_{2k})=1, \\[4pt]
\tg(a_{2l-1},a_{2l-1})=-1,\quad & \tg(a_{2l},a_{2l})=1, 
\end{array}
\end{equation*}
we conclude that
the signature of $\tg$ is $(n,n)$.

Together with \eqref{tgP}, the relation $\tg(x,Py)=g(x,y)$ is also valid. Thus, these metrics we call \emph{twin $P$-metrics} on $M$.

The Levi-Civita connections of $g$ and $\tg$ are denoted by $\n$ and $\tn$, respectively.
The interchange of $\n$ and $\tn$ (and respectively $g$ and $\tg$) we call the \emph{twin interchange}.

The
tensor filed $F$ of type $(0,3)$ on $M$ is defined by
\begin{equation}\label{F}
F(x,y,z)=g\bigl( \left( \nabla_x P \right)y,z\bigr).
\end{equation}
It has the following properties \cite{Nav}
\begin{equation}\label{F-prop}
F(x,y,z)=F(x,z,y)=-F(x,Py,Pz),\quad F(x,Py,z)=-F(x,y,Pz).
\end{equation}

Let $\{e_i\}$ ($i=1,2,\dots,2n$) be an arbitrary basis of
$T_pM$ at a point $p$ of $M$. The components of the inverse matrix
of $g$ are denoted by $g^{ij}$ with respect to
$\{e_i\}$.

The Lee forms $\ta$ and $\ta^*$ associated with $F$ are defined by
\begin{equation}\label{ta}
\ta(z)=g^{ij}F(e_i,e_j,z),\quad \ta^*(z)=g^{ij}F(e_i,Pe_j,z).
\end{equation}
For the 1-form $\ta^*$, using $\tg$, we have the following
\[
\ta^*(z)=g^{ij}F(e_i,Pe_j,z)=P^j_kg^{ik}F(e_i,e_j,z)=\tg^{ij}F(e_i,e_j,z)
\]
and the identity
\begin{equation}\label{ta*taJ}
\ta^*=-\ta\circ P
\end{equation}
is true by means of \eqref{F-prop} due to
\[
\ta^*(z)=g^{ij}F(e_i,Pe_j,z)=-g^{ij}F(e_i,e_j,Pz)=-\ta(Pz).
\]

The \emph{potential} $\Phi$ of $\tn$ regarding $\n$ is given by the formula
\begin{equation}\label{tn=nPhi}
    \tn_x y=\n_x y+\Phi(x,y).
\end{equation}
Since both the connections are torsion-free, then $\Phi$ is symmetric, i.e. $\Phi(x,y)=\Phi(y,x)$.
Let the corresponding tensor of type $(0,3)$ with respect to $g$ be defined by
\begin{equation}\label{Phi03}
    \Phi(x,y,z)=g(\Phi(x,y),z).
\end{equation}

     By virtue of \eqref{F-prop} and \eqref{tn=nPhi}, the following interrelations between $F$ and $\Phi$ are valid \cite{StaGri92}
\begin{equation}\label{PhiF}
    \Phi(x,y,z)=\frac{1}{2}\bigl\{F(x,y,Pz)+F(y,x,Pz)-F(Pz,x,y)\bigr\},
\end{equation}
\begin{equation}\label{FPhi}
    F(x,y,z)=\Phi(x,y,Pz)+\Phi(x,z,Py).
\end{equation}

Taking into account \eqref{F-prop} and \eqref{FPhi}, we obtain the following property
for an arbitrary $\W$-manifold
\begin{equation}\label{Phi-prop}
  \Phi(x,y,z)+\Phi(x,z,y)+\Phi(x,Py,Pz)+\Phi(x,Pz,Py)=0.
\end{equation}

The associated 1-forms $f$ and $f^*$ of $\Phi$ are defined by
\[
f(z)=g^{ij}\Phi(e_i,e_j,z),\qquad f^*(z)=g^{ij}\Phi(e_i,Pe_j,z).
\]
Then, from \eqref{Phi-prop} we get the identity
\begin{equation}\label{f-prop}
f=-f^*\circ P.
\end{equation}
The latter identity resembles the equality $\ta=-\ta^*\circ P$, which is equivalent to \eqref{ta*taJ}.
Indeed, there exists a relation between the associated 1-forms of $F$ and $\Phi$.
It follows from \eqref{PhiF}, \eqref{ta*taJ} and has the form
\begin{equation}\label{fta}
f=-\ta^*,\qquad f^*=-\ta.
\end{equation}

A classification of Riemannian almost product  manifolds having a traceless structure $P$ with respect to $F$
is given in \cite{StaGri92}.
It is applicable to the considered $\W$-manifolds.
All eight classes of these manifolds are characterized there by properties of $F$ as follows
\begin{equation}\label{class}
\begin{array}{l}
\W_0:\; F(x,y,z)=0;\\[4pt]
\W_1:\; F(x,y,z)=\frac{1}{2n} \bigl\{
g(x,y)\ta(z)+\tg(x, y)\ta^*(z)\\[4pt]
\phantom{\W_1:\; F(x,y,z)=\frac{1}{2n}}
    +g(x,z)\ta(y)+\tg(x,z)\ta^*(y)\bigr\};\\[4pt]
\W_2:\;
F(x,y,P z)+F(y,z,P x)+F(z,x,P y)=0,\quad \ta=0;\\[4pt]
\W_3:\; F(x,y,z)+F(y,z,x)+F(z,x,y)=0;\\[4pt]
\W_1\oplus\W_2:\;
F(x,y,P z)+F(y,z,P x)+F(z,x,P y)=0;\\[4pt]
\W_1\oplus\W_3:\;
F(x,y,z)+F(y,z,x)+F(z,x,y)\\[4pt]
\phantom{\W_1\oplus\W_3:\;}
=\frac{1}{n} \bigl\{
g(x,y)\ta(z)+g(y,z)\ta(x)+g(z,x)\ta(y)\\[4pt]
\phantom{\W_1\oplus\W_3:\; =\frac{1}{n}\;\;}
    +\tg(x,y)\ta^*(z)+\tg(y,z)\ta^*(x)+\tg(z,x)\ta^*(y)\bigr\};\\[4pt]
\W_2\oplus\W_3:\;
\ta=0;\\[4pt]
\W_1\oplus\W_2\oplus\W_3:\;
\text{no conditions}.
\end{array}
\end{equation}
An equivalent classification in terms of $\Phi$ is proposed in the same paper by the following way
\begin{equation}\label{class2}
\begin{array}{l}
\W_0:\; \Phi(x,y,z)=0;\\[4pt]
\W_1:\; \Phi(x,y,z)=\frac{1}{2n}\left\{g(x,y)f(z)+\tg(x,y)f^*(z)\right\};\\[4pt]
\W_2:\;
\Phi(x,y,z)=\Phi(Px,Py,z),\quad f=0;\\[4pt]
\W_3:\; \Phi(x,y,z)=-\Phi(Px,Py,z);\\[4pt]
\W_1\oplus\W_2:\;
\Phi(x,y,z)=\Phi(Px,Py,z);\\[4pt]
\W_1\oplus\W_3:\;
\Phi(x,y,z)+\Phi(Px,Py,z)
=\frac{1}{n} \bigl\{
g(x,y)f(z)+\tg(x,y)f^*(z)\bigr\};\\[4pt]
\W_2\oplus\W_3:\;
f=0;\\[4pt]
\W_1\oplus\W_2\oplus\W_3:\;
\text{no conditions}.
\end{array}
\end{equation}

The square norm of $\nabla P$ is defined by the following equality \cite{Mek08}
\[
    \nP=g^{ij}g^{kl}
        g\bigl(\left(\nabla_{e_i} P\right)e_k,\left(\nabla_{e_j} P\right)e_l\bigr).
\]
By means of \eqref{F} and \eqref{F-prop}, we obtain the following equivalent formula
\begin{equation}\label{snorm}
    \nP=g^{ij}g^{kl}g^{st}F_{iks}F_{jlt},
\end{equation}
where $F_{iks}=F(e_i,e_k,e_s)$.
A $\W$-manifold satisfying the
condition $\nP=0$ we call an
\emph{isotropic $\W_0$-manifold}.
Obviously, if a $\W$-manifold belongs to $\W_0$ (i.e. it is a $\W_0$-manifold), then
it is an isotropic $\W_0$-manifold. Let us remark that the inverse statement is not
always true.


Let $R$ be the curvature tensor field of $\nabla$ defined by
$
    R(x,y)z=\nabla_x \nabla_y z - \nabla_y \nabla_x z -
    \nabla_{[x,y]}z$.
The corresponding tensor field of type $(0,4)$ is determined by
$R(x,y,z,w)=g(R(x,y)z,w)$. It has the following properties:
\begin{equation}\label{curv}
\begin{array}{l}%
R(x,y,z,w)=-R(y,x,z,w)=-R(x,y,w,z),\\[4pt]
R(x,y,z,w)+R(y,z,x,w)+R(z,x,y,w)=0.
\end{array}%
\end{equation}
Any tensor of type (0,4) satisfying \eqref{curv}
is called a \emph{curvature-like tensor}.
The Ricci tensor $\rho$ and the scalar
curvature $\tau$ for $R$ 
are defined as usual by the equalities 
$\rho(y,z)=g^{ij}R(e_i,y,z,e_j)$ and $\tau=g^{ij}\rho(e_i,e_j)$.
%
%

Let $\tR$ be the curvature tensor of $\tn$ defined as usually.
Obviously, the corresponding curvature (0,4)-tensor is
$\tR(x,y,z,u)=\tg(\tR(x,y)z,u)$ and it has the same properties as in \eqref{curv}.


\section{The twin interchange corresponding to the pair of twin $P$-metrics and their Levi-Civita connections}



\subsection{Invariant classification}

\begin{lem}\label{lem:Phi}
  The potential $\Phi(x,y)$ is an anti-invariant tensor under the twin interchange, i.e.
\begin{equation}\label{tP=-P}
    \tP(x,y)=-\Phi(x,y).
\end{equation}
\end{lem}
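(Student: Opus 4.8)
The plan is to read the claim directly off the defining relation of the potential, so that essentially no computation is needed. By \eqref{tn=nPhi}, the potential $\Phi$ is simply the difference of the two Levi-Civita connections, viewed as a vector-valued $(1,2)$-tensor,
\begin{equation*}
\Phi(x,y)=\tn_x y-\n_x y,
\end{equation*}
which is tensorial because the difference of two linear connections is tensorial, and symmetric because both $\n$ and $\tn$ are torsion-free.

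The one conceptual step I would emphasize is the effect of the twin interchange on this relation. The interchange swaps the pairs $(g,\n)$ and $(\tg,\tn)$, so in the interchanged picture $\tn$ takes over the role of $\n$ and conversely. Consequently the potential $\tP$ in the interchanged setting is obtained from \eqref{tn=nPhi} by exchanging $\n$ and $\tn$, that is,
\begin{equation*}
\n_x y=\tn_x y+\tP(x,y),
\end{equation*}
whence $\tP(x,y)=\n_x y-\tn_x y$. Comparing the two displayed formulas gives $\tP(x,y)=-\Phi(x,y)$ immediately, which is \eqref{tP=-P}.

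I do not expect a genuine obstacle here: the result is in effect the antisymmetry of the connection difference under swapping the two connections. The only point deserving a little care is to work throughout with the $(1,2)$-tensor form of the potential, whose definition \eqref{tn=nPhi} involves neither metric. Had one instead used the $(0,3)$-tensor form \eqref{Phi03}, the lowering metric would also change under the interchange, and the transformation would no longer be a bare sign; this distinction is what makes the clean anti-invariance \eqref{tP=-P} a statement about the vector-valued potential specifically.
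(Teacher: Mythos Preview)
Your argument is correct and takes a genuinely different, much shorter route than the paper. The paper proves \eqref{tP=-P} by first computing the fundamental tensor $\tF$ of $(M,P,\tg)$ in terms of $F$ (formula \eqref{tFF}), then applying the analogue of \eqref{PhiF} to express the $(0,3)$-tensor $\tP(x,y,z)$ through $\tF$, and combining these to obtain the relation $\tP(x,y,z)=-\Phi(x,y,Pz)$, from which the $(1,2)$-statement \eqref{tP=-P} is finally extracted. You bypass all of this by noting that the $(1,2)$-potential is literally the connection difference $\tn-\n$, which changes sign when the two connections are swapped; the only prerequisite is the involutivity of the twin interchange (i.e.\ $\widetilde{\tg}=g$, so the twin's Levi-Civita connection is $\n$), and this is recorded in Section~\ref{sec_1}. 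What the paper's longer computation buys is the intermediate identity \eqref{tPPhi}, which is reused in the proofs of \lemref{lem:f} and \propref{prop:inv.NhN}; but \eqref{tPPhi} follows from your \eqref{tP=-P} in one line via $\tP(x,y,z)=\tg(\tP(x,y),z)=g(-\Phi(x,y),Pz)=-\Phi(x,y,Pz)$, so nothing is lost by taking your shortcut.
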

\begin{proof}
Taking into account \eqref{F-prop},
\eqref{tn=nPhi},  \eqref{Phi03} and \eqref{PhiF}, we get the following relation between $F$ and its corresponding tensor $\tF$ for $(M,P,\tg)$, defined by $\tF(x,y,z)=\tg\bigl(\bigl(\tn_xP\bigr)y,z\bigr)$,
\begin{equation}\label{tFF}
    \tF(x,y,z)=\frac{1}{2}\bigl\{F(Py,z,x)-F(y,Pz,x)+F(Pz,y,x)-F(z,Py,x)\bigr\}.
\end{equation}
Applying \eqref{PhiF}, we obtain the corresponding formula for $\tP$ and $\tF$ in the form
\begin{equation}\label{tPtF}
				\tP(x,y,z)=\frac{1}{2}\bigl\{\tF(x,y,Pz)+\tF(y,x,Pz)-\tF(Pz,x,y)\bigr\}.
\end{equation}
Using \eqref{tFF}, \eqref{tPtF} and \eqref{PhiF}, we reach the following equality
\begin{equation}\label{tPPhi}
\tP(x,y,z)=-\Phi(x,y,Pz).
\end{equation}
The identity \eqref{tP=-P} follows from the latter eqiality and the definition of $\tP(x,y,z)$ by $\tP(x,y,z)=\tg(\tP(x,y),z)$.
\end{proof}

\begin{lem}\label{lem:f}
The associated 1-forms $f$ and $f^*$ of $\Phi$ and the Lee forms $\ta$ and $\ta^*$ are invariant under the twin interchange, i.e.
\begin{gather}
\widetilde{f}(z)=f(z), \qquad \widetilde{f}^*(z)=f^*(z);\label{tff}
\\
\ta(z)=\widetilde{\ta}(z),\qquad \ta^*(z)=\widetilde{\ta}^*(z).\label{tata*}
\end{gather}
\end{lem}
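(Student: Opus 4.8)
The plan is to deduce everything from \lemref{lem:Phi}, whose key output is the pointwise identity \eqref{tPPhi}, namely $\tP(x,y,z)=-\Phi(x,y,Pz)$, together with the observation already used in the derivation of \eqref{ta*taJ} that contracting with the inverse of $\tg$ amounts to inserting $P$, i.e. $\tg^{ij}=P^j_k g^{ik}$. Since the associated $1$-forms of the twin structure $(M,P,\tg)$ are traced with $\tg$, so that $\widetilde{f}(z)=\tg^{ij}\tP(e_i,e_j,z)$ and $\widetilde{f}^*(z)=\tg^{ij}\tP(e_i,Pe_j,z)$, I would compute these two forms directly and reduce them to $f$ and $f^*$ by means of the universal identity \eqref{f-prop}.

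First I would substitute \eqref{tPPhi} to obtain $\widetilde{f}(z)=-\tg^{ij}\Phi(e_i,e_j,Pz)$. Replacing $\tg^{ij}$ by $P^j_k g^{ik}$ and absorbing the factor $P^j_k$ into the second slot, so that $e_j\mapsto Pe_k$, this becomes $-g^{ik}\Phi(e_i,Pe_k,Pz)=-f^*(Pz)$; by \eqref{f-prop} the right-hand side equals $f(z)$, which is the first equality in \eqref{tff}. For $\widetilde{f}^*(z)=-\tg^{ij}\Phi(e_i,Pe_j,Pz)$ the same substitutions produce a product $P^j_k P^l_j=\delta^l_k$, so the two structure insertions cancel by $P^2=\mathrm{id}$ and one is left with $-g^{ik}\Phi(e_i,e_k,Pz)=-f(Pz)$; invoking \eqref{f-prop} in the equivalent form $f^*=-f\circ P$ yields $\widetilde{f}^*=f^*$, the second equality in \eqref{tff}.

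Finally, for the Lee forms I would appeal to \eqref{fta}, which reads $f=-\ta^*$ and $f^*=-\ta$. Because \eqref{fta} is a universal identity valid on every $\W$-manifold, it applies verbatim to $(M,P,\tg)$ and gives $\widetilde{f}=-\widetilde{\ta}^*$ and $\widetilde{f}^*=-\widetilde{\ta}$. Combining these with the already-established $\widetilde{f}=f$ and $\widetilde{f}^*=f^*$ immediately produces $\widetilde{\ta}^*=\ta^*$ and $\widetilde{\ta}=\ta$, which is exactly \eqref{tata*}.

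The computations themselves are routine index manipulations; the one point requiring care is the legitimacy of applying \eqref{f-prop} and \eqref{fta} to the twin structure. Since both are derived solely from the algebraic relations \eqref{Phi-prop}, \eqref{PhiF} and \eqref{ta*taJ}, which hold for any $P$-metric and are insensitive to the twin interchange, this transfer is justified. I expect this transfer to be the only conceptual step, the remainder being bookkeeping of the factors of $P$ and of the distinction between $g^{ij}$ and $\tg^{ij}$.
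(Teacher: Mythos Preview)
Your argument is correct and follows essentially the same route as the paper: trace \eqref{tPPhi} with $\tg^{ij}=P^j_kg^{ik}$ to obtain $\widetilde{f}(z)=-f^*(Pz)$, invoke \eqref{f-prop}, handle $\widetilde{f}^*$ analogously, and then deduce the invariance of $\ta$, $\ta^*$ from \eqref{fta}. Your explicit remark that \eqref{f-prop} and \eqref{fta} transfer to the twin structure is a welcome clarification of a step the paper leaves implicit.
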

\begin{proof}
Taking the trace of \eqref{tPPhi} by $\tg^{ij}=P^j_kg^{ik}$ for $x=e_i$ and $y=e_j$, we have $\widetilde{f}(z)=-f^*(Pz)$. Then, because of \eqref{f-prop}, we obtain the statement for $f$ and similarly for $f^*$. The invariance of the Lee forms follows directly from \eqref{tff} and \eqref{fta}.
\end{proof}

\begin{thm}\label{thm:inv.cl}
  All eight classes of almost paracomplex pseudo-Riemannian manifolds are invariant under the twin interchange.
\end{thm}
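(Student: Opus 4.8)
The plan is to run the argument entirely through the $\Phi$-characterization of the eight classes given in \eqref{class2}, leaning on the two lemmas just established. For a class $\W_i$, invariance under the twin interchange means that if $(M,P,g)$ satisfies the defining condition of $\W_i$, then the twin structure $(M,P,\tg)$ satisfies the same condition rewritten with $\tg$ in place of $g$, with the associated metric of $\tg$ in place of $\tg$, with $\tP$ in place of $\Phi$, and with $\widetilde{f},\widetilde{f}^*$ in place of $f,f^*$. Here the crucial observation is that the associated metric of $\tg$ is $g$ itself, since $\tg(x,Py)=g(x,y)$, so the interchange genuinely swaps the roles of $g$ and $\tg$. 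Thus for each class I would write down this ``tilde'' condition and show, using the key relation \eqref{tPPhi}, namely $\tP(x,y,z)=-\Phi(x,y,Pz)$, together with the invariance of the associated $1$-forms from \lemref{lem:f}, that it is equivalent to the original $\W_i$-condition on $\Phi$.

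First I would dispose of the classes governed purely by the behaviour of $\Phi$ under $P$ in its first two slots. For $\W_3$ the tilde condition reads $\tP(x,y,z)=-\tP(Px,Py,z)$; substituting \eqref{tPPhi} turns this into $\Phi(x,y,Pz)=-\Phi(Px,Py,Pz)$, and replacing $z$ by $Pz$ (using $P^2=\mathrm{id}$) returns exactly $\Phi(x,y,z)=-\Phi(Px,Py,z)$. The identical $z\mapsto Pz$ trick handles $\W_2$ and $\W_1\oplus\W_2$, where the defining relation carries a plus sign, while the trace condition $f=0$ occurring in $\W_2$ and $\W_2\oplus\W_3$ is invariant at once by \eqref{tff}. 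The two extreme cases are immediate: $\W_0$ because $\tP=0\Leftrightarrow\Phi=0$ as $P$ is invertible, and $\W_1\oplus\W_2\oplus\W_3$ because there is no condition to preserve.

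The step that requires the most care is $\W_1$, and correspondingly $\W_1\oplus\W_3$, where both metrics and both associated $1$-forms enter and their roles are interchanged. Here the tilde condition takes the form
\[
\tP(x,y,z)=\tfrac{1}{2n}\bigl\{\tg(x,y)\widetilde{f}(z)+g(x,y)\widetilde{f}^*(z)\bigr\},
\]
in which $g$ and $\tg$ have swapped places precisely because the associated metric of $\tg$ is $g$. Substituting \eqref{tPPhi}, applying $\widetilde{f}=f$ and $\widetilde{f}^*=f^*$ from \eqref{tff}, and then replacing $z$ by $Pz$, I would be left with $f(Pz)$ and $f^*(Pz)$ on the right-hand side. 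The identity \eqref{f-prop}, $f=-f^*\circ P$, converts these into $-f^*(z)$ and $-f(z)$ respectively; after collecting the signs the expression reduces exactly to the original condition $\Phi(x,y,z)=\tfrac{1}{2n}\{g(x,y)f(z)+\tg(x,y)f^*(z)\}$. The same bookkeeping, now with the factor $\tfrac{1}{n}$ and the combined left side $\Phi(x,y,z)+\Phi(Px,Py,z)$, settles $\W_1\oplus\W_3$. The hard part is therefore essentially notational: keeping straight the interchange $g\leftrightarrow\tg$ and the induced interchange $f\leftrightarrow f^*$ dictated by \eqref{f-prop} and \eqref{fta}, so that the several sign changes cancel and each defining identity is recovered verbatim.
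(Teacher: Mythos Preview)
Your proposal is correct and follows essentially the same approach as the paper: both use the $\Phi$-characterization \eqref{class2} together with \lemref{lem:Phi}, \lemref{lem:f}, the relation \eqref{tPPhi}, and the identity \eqref{f-prop}. The only difference is expository: the paper's proof is a two-line reference to these tools, whereas you spell out the case-by-case verification (including the bookkeeping for $\W_1$ and $\W_1\oplus\W_3$) explicitly.
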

\begin{proof}
We use the classification \eqref{class2} in terms of $\Phi$.
Obviously, applying \lemref{lem:Phi}, \lemref{lem:f}, equalities \eqref{tPPhi}, \eqref{Phi03} and \eqref{f-prop}, we establish the truthfulness of the statement.
\end{proof}


\subsection{Invariant connection}

Let us define an affine connection $D$ on $TM$ by
\begin{equation}\label{hn=nP}
D_x y=\n_x y+\frac12\Phi(x,y).
\end{equation}
Applying \eqref{tn=nPhi}, we find that $D$ is actually the \emph{average connection} of $\n$ and $\tn$, i.e.
\begin{equation}\label{av.con}
D_x y=
\frac12\left\{\n_x y+\tn_x y\right\}.
\end{equation}

\begin{prop}\label{prop:inv.conn}
    The average connection $D$ of $\n$ and $\tn$ is an invariant connection under the twin interchange.
\end{prop}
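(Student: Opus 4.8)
The plan is to exploit the characterization of $D$ as the \emph{average} connection recorded in \eqref{av.con}, namely $D_x y = \frac12\{\n_x y + \tn_x y\}$, and to show that this expression is manifestly symmetric under the swap $\n \leftrightarrow \tn$. The first step is to identify precisely the image $\widetilde{D}$ of $D$ under the twin interchange. By definition, $\widetilde{D}$ is the average connection built on $(M,P,\tg)$ in place of $(M,P,g)$: it is $\frac12$ of the sum of $\tn$ and the Levi-Civita connection of the metric associated to $\tg$.

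The key observation is that the metric associated to $\tg$ is $g$ itself. Indeed, the twin $P$-metrics are mutually associated, as recorded by the relation $\tg(x,Py)=g(x,y)$ stated just after \eqref{tgP}; equivalently, associating $\tg$ via $P$ returns $\tg(x,Py)=g(x,P^2y)=g(x,y)$. Hence the Levi-Civita connection of the metric associated to $\tg$ is $\n$, so the twin interchange acts as the involution sending $\n\mapsto\tn$ and $\tn\mapsto\n$. Consequently
\begin{equation*}
\widetilde{D}_x y = \frac12\{\tn_x y + \n_x y\} = \frac12\{\n_x y + \tn_x y\} = D_x y,
\end{equation*}
which is exactly the asserted invariance.

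An equivalent route, avoiding the explicit appeal to \eqref{av.con}, would start from the defining formula \eqref{hn=nP}. Under the twin interchange this reads $\widetilde{D}_x y = \tn_x y + \frac12\tP(x,y)$, where $\tP$ is now the potential of $\n$ with respect to $\tn$. Substituting $\tn_x y = \n_x y + \Phi(x,y)$ from \eqref{tn=nPhi} together with the anti-invariance $\tP(x,y)=-\Phi(x,y)$ established in \lemref{lem:Phi}, the two half-contributions of $\Phi$ cancel to leave $\widetilde{D}_x y = \n_x y + \frac12\Phi(x,y) = D_x y$. Either way the computation is entirely routine; the only point that will require care — and the sole conceptual obstacle — is the bookkeeping of what the tilde operation denotes, i.e. correctly recognizing that applying the interchange twice is the identity and that the metric associated to $\tg$ is $g$.
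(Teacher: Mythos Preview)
Your proposal is correct, and your second route is exactly the paper's own proof: the paper computes $\widetilde{D}_x y=\tn_x y+\frac12\tP(x,y)=\n_x y+\Phi(x,y)-\frac12\Phi(x,y)=D_x y$ using \eqref{tn=nPhi}, \eqref{tP=-P} and \eqref{hn=nP}. Your first route, via the manifest symmetry of \eqref{av.con} under $\n\leftrightarrow\tn$, is just a cosmetic repackaging of the same computation.
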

\begin{proof}
It follows from \eqref{tn=nPhi}, \eqref{tP=-P} and \eqref{hn=nP}, because of the following equalities
\[
\widetilde{D}_x y=\tn_x y+\frac12\tP(x,y)=\n_x y+\Phi(x,y)-\frac12\Phi(x,y)
=\n_x y+\frac12\Phi(x,y)=D_x y.
\]
\end{proof}

\begin{cor}\label{cor:inv.conn}
    If the invariant connection $D$ vanishes then $(M,P,g)$ and $(M,P,\tg)$ are $\W_0$-manifolds and the coincidental connections $\n$ and $\tn$ also vanish.
\end{cor}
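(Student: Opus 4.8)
The plan is to take the hypothesis literally---$D_x y=0$ for all $x,y$---and to feed it into the two descriptions \eqref{hn=nP} and \eqref{av.con} of the average connection. First I would substitute $D_x y=0$ into \eqref{hn=nP} to obtain $\n_x y=-\frac12\Phi(x,y)$, and then combine this with \eqref{tn=nPhi} (equivalently, read it off from \eqref{av.con}) to get $\tn_x y=\n_x y+\Phi(x,y)=\frac12\Phi(x,y)=-\n_x y$. Thus the hypothesis is equivalent to the two algebraic relations $\n=-\frac12\Phi$ and $\tn=-\n$, which already express both connections purely through the single tensor $\Phi$.

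The decisive step is to disentangle the affine part from the tensorial part in the relation $\n_x y=-\frac12\Phi(x,y)$. Here I would use that $\Phi$ is a $(1,2)$-tensor, hence $C^\infty(M)$-linear in its second argument, whereas $\n$ obeys the Leibniz rule. Replacing $y$ by $fy$ for an arbitrary smooth function $f$ and comparing $\n_x(fy)=(xf)\,y+f\,\n_x y$ with $-\frac12\Phi(x,fy)=-\frac12 f\,\Phi(x,y)$, the tensorial contributions cancel and only the inhomogeneous Leibniz term remains, so the identity can hold for all $f$, $x$, $y$ only if that term is absent; this isolates $\n_x y=0$ and, in turn, $\Phi(x,y)=0$ identically. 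I expect this separation---the fact that a genuine covariant derivative cannot agree with a tensor field unless both vanish---to be the main obstacle, since it is exactly where one must argue carefully rather than merely substitute; everything else is bookkeeping with the identities already at hand.

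Once $\Phi=0$ is in place, the remaining assertions follow from results proved above. By the classification \eqref{class2}, the condition $\Phi(x,y,z)=0$ characterizes the class $\W_0$, so $(M,P,g)$ is a $\W_0$-manifold; and by \lemref{lem:Phi} we have $\tP=-\Phi=0$, whence $(M,P,\tg)$ is a $\W_0$-manifold as well. Finally, $\Phi=0$ reduces \eqref{tn=nPhi} to $\tn=\n$, so the two Levi-Civita connections coincide, and since we also have $\n=-\frac12\Phi=0$, both $\n$ and $\tn$ vanish, which completes the chain of conclusions.
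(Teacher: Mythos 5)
Your opening step agrees with the paper: from \eqref{hn=nP} and \eqref{av.con} you correctly get $\n_x y=-\frac12\Phi(x,y)$ and $\tn=-\n$, and your closing paragraph (classification \eqref{class2}, \lemref{lem:Phi}, and \eqref{tn=nPhi}) is also in order. The problem is the middle step, which you yourself single out as the decisive one. Comparing $\n_x(fy)=(xf)y+f\,\n_x y$ with $-\frac12\Phi(x,fy)=f\,\n_x y$ does not ``isolate $\n_x y=0$'': after the terms $f\,\n_x y$ cancel, what remains is $(xf)\,y=0$ for all $f$, $x$, $y$, and on a manifold of positive dimension this is simply false (take $f$ a coordinate function, $x$ the corresponding coordinate field, $y\neq 0$). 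So your computation shows that the hypothesis ``$D_x y=0$ for all vector fields $x,y$'' can never be satisfied, i.e. it proves the corollary vacuous, rather than deriving $\n=0$ and $\Phi=0$ from it. The principle you invoke --- that a covariant derivative cannot agree with a tensor field ``unless both vanish'' --- is not correct: agreement is impossible even when both vanish, because the zero operator itself violates the Leibniz rule. Note that the same objection would equally destroy the conclusion of the corollary (that $\n$ and $\tn$ ``also vanish''), which signals that the literal operator reading is not the intended one.

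The statement has content only when ``vanishes'' is read on a distinguished frame --- e.g. the components $D_{X_i}X_j$ with respect to the basis of left-invariant fields on the Lie group of Section~\ref{sec_3}, which is exactly where the paper applies such conditions. Under that reading your argument is unavailable, since $fy$ leaves the class of fields on which the hypothesis is assumed. The paper's proof is built to survive this reading and uses a different mechanism: from $\Phi=-2\n$, the symmetry of the potential $\Phi$ together with the vanishing torsion of $\n$ gives $[x,y]=\n_x y-\n_y x=-\frac12\{\Phi(x,y)-\Phi(y,x)\}=0$; then the Koszul formula (whose metric-derivative terms are zero on such an invariant frame) forces $\n=0$, hence $\Phi=-2\n=0$, $\tn=\n=0$, and both $(M,P,g)$ and $(M,P,\tg)$ lie in $\W_0$. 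You should replace your tensoriality-versus-Leibniz argument by this bracket-plus-Koszul computation.
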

\begin{proof}
If $D=0$ holds, then we have $\n=-\tn$ and $\Phi=-2\n$, because of \eqref{hn=nP} and \eqref{av.con}. Hence we obtain $[x,y]=\n_x y-\n_y x=-\frac12\{\Phi(x,y)-\Phi(y,x)\}=0$ and consequently, using the Koszul formula
\[
2g(\nabla_x y,z)=xg(y,z)+yg(x,z)-zg(x,y)+g([x,y],z)+g([z,x],y)+g([z,y],x),
\]
we get $\n=0$. Thus, $\tn$ and $\Phi$ vanish, i.e. $(M,P,g)$ and $(M,P,\tg)$ are $\W_0$-manifolds.
\end{proof}

\subsection{Invariant tensors}

The Nijenhuis tensor $N$ of the almost paracomplex structure $P$ is
defined by
\begin{equation}\label{NJ}
N(x,y) = [P, P](x, y)=\left[Px,Py\right]+\left[x,y\right]-P\left[Px,y\right]-P\left[x,Py\right].
\end{equation}
In \cite{ManTav19}, it is introduced a symmetric
(1,2)-tensor $\wh{N}$, defined by
\begin{equation*}\label{tNJ}
\wh{N}(x,y)=\{P ,P\}(x,y)=\{Px,Py\}+\{x,y\}-P\{Px,y\}-P\{x,Py\},
\end{equation*}
where the symmetric braces $\{x,y\}=\nabla_xy+\nabla_yx$
replace the antisymmetric brackets $[x,y]=\nabla_xy-\nabla_yx$ in \eqref{NJ}.
The tensor $\wh{N}$ is called the  \emph{associated
Nijenhuis tensor} of $P$. The tensor $\wh{N}$ coincides with the associated tensor of ${N}$ introduced in \cite{StaGri92} by an equivalent equality for $F$.
The corresponding tensors of type $(0,3)$ with respect to $g$ of the pair of Nijenhuis tensors $N$ and $\tN$ are defined by
\begin{equation}\label{NtN3}
N(x,y,z)=g\bigl(N(x,y),z\bigr),\qquad \wh{N}(x,y,z)=g\bigl(\wh{N}(x,y),z\bigr).
\end{equation}


\begin{prop}\label{prop:inv.NhN}
    The Nijenhuis tensor is invariant and the associated Nijenhuis tensor is anti-invariant under the twin interchange, i.e.
    \[
    N(x,y)=\tN(x,y),\qquad \wh{N}(x,y)=-\widetilde{\wh{N}}(x,y).
    \]
\end{prop}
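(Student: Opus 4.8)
The plan is to prove both invariance statements by first converting the definitions of the Nijenhuis tensors, which are written in terms of Lie brackets and the structure $P$, into expressions involving the potential $\Phi$, and then to apply the anti-invariance of $\Phi$ established in \lemref{lem:Phi}. The key observation is that although $N$ and $\wh N$ are defined using an arbitrary connection-free bracket/symmetric-brace formalism, they can be rewritten using $\n$ since the Levi-Civita connection is torsion-free. Concretely, in \eqref{NJ} one replaces each bracket $[u,v]$ by $\n_u v-\n_v u$, so that $N(x,y)$ becomes an expression built entirely from $\n P$; by \eqref{F} this is exactly $F$, and hence (via \eqref{PhiF}, \eqref{FPhi}) an expression in $\Phi$. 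The same substitution in $\wh N$, using $\{u,v\}=\n_u v+\n_v u$, yields a companion expression in $\Phi$.

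First I would write out $N(x,y)$ and $\wh N(x,y)$ explicitly in terms of $\nabla P$. Using $\n P$ and \eqref{NJ}, a direct expansion gives
\begin{equation*}
N(x,y)=\left(\n_{Px}P\right)y-\left(\n_{Py}P\right)x-P\left(\n_x P\right)y+P\left(\n_y P\right)x,
\end{equation*}
and the analogous computation for $\wh N$, where the antisymmetrization is replaced by symmetrization, produces
\begin{equation*}
\wh N(x,y)=\left(\n_{Px}P\right)y+\left(\n_{Py}P\right)x-P\left(\n_x P\right)y-P\left(\n_y P\right)x.
\end{equation*}
Passing to the $(0,3)$-tensors via \eqref{NtN3} and \eqref{F}, each term becomes a value of $F$, so that $N(x,y,z)$ and $\wh N(x,y,z)$ are explicit linear combinations of components of $F$ (with arguments permuted and some carrying $P$). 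This reduces the problem to comparing these $F$-expressions for $g$ with the corresponding $\tF$-expressions for $\tg$.

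Next I would invoke the relation \eqref{tFF} between $\tF$ and $F$ already derived in the proof of \lemref{lem:Phi}, together with the symmetry properties \eqref{F-prop}, to compute the twin counterparts $\tN(x,y,z)$ and $\widetilde{\wh N}(x,y,z)$. Substituting \eqref{tFF} into the $F$-expressions for $N$ and $\wh N$ and simplifying with \eqref{F-prop}, one expects the antisymmetric combination defining $N$ to reproduce itself (giving $N=\tN$), while the symmetric combination defining $\wh N$ picks up an overall sign (giving $\wh N=-\widetilde{\wh N}$). Alternatively, and perhaps more cleanly, one can route the argument entirely through $\Phi$: rewrite $N$ and $\wh N$ in terms of $\Phi$ using \eqref{FPhi}, and then apply \eqref{tP=-P} and \eqref{tPPhi} directly. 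The sign discrepancy between the two tensors then arises naturally, because $N$ involves an antisymmetrization in $x,y$ that cancels the single overall sign change of $\Phi$ once the $P$ in the third slot is accounted for, whereas $\wh N$ does not.

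The main obstacle will be bookkeeping: tracking the sign changes and the placement of $P$ across the several substitutions \eqref{tFF}, \eqref{F-prop} and \eqref{tPPhi} without error, since each term of $N$ and $\wh N$ transforms in a slightly different way and the desired conclusion hinges on exactly how these signs combine. The conceptual content is light, but the computation must be organized carefully—ideally by grouping the four terms of each tensor and transforming them in parallel—so that the cancellation producing $N=\tN$ and the reinforcement producing $\wh N=-\widetilde{\wh N}$ are transparent rather than the outcome of an unilluminating term-by-term grind.
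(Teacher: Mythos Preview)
Your proposal is correct and follows essentially the same route as the paper: express $N$ and $\wh N$ in terms of $\Phi$ and then apply the anti-invariance \eqref{tP=-P}, \eqref{tPPhi}. The paper streamlines this by quoting directly from \cite{StaGri92} the closed formulas
\[
N(x,y,z)=2\Phi(z,x,y)+2\Phi(z,Px,Py),\qquad \wh N(x,y,z)=-2\Phi(x,y,z)-2\Phi(Px,Py,z),
\]
rather than deriving them via $\nabla P$ and $F$ as you plan to do.

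One remark on your heuristic: the sign difference between the two cases is not really produced by the antisymmetry of $N$ in $x,y$. What actually drives it is the position of $z$ in the $\Phi$-formulas above. For $\wh N$ the argument $z$ sits in the third slot of $\Phi$, so \eqref{tPPhi} gives $\widetilde{\wh N}(x,y,z)=-\wh N(x,y,Pz)$ immediately. For $N$ the argument $z$ sits in the first slot, so \eqref{tPPhi} first yields $\tN(x,y,z)=-N(x,Py,z)$; one then needs the extra identity $N(x,Py,z)=-N(x,y,Pz)$ (a consequence of \eqref{str-P}, \eqref{str-g} and \eqref{NJ}) to reach $\tN(x,y,z)=N(x,y,Pz)$, whence $\tN(x,y)=N(x,y)$. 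You would uncover this in the computation, but it is worth flagging since it is the one step in the paper's argument that is not pure bookkeeping. (Incidentally, the $(1,2)$-tensor identity $N=\tN$ is in fact tautological, since $N$ is built from Lie brackets and $P$ alone and involves no metric or connection; neither you nor the paper exploit this shortcut.)
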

\begin{proof}
The following relations of $N$ and $\wh{N}$ with $\Phi$ are known from \cite{StaGri92} 
\begin{gather}
  N(x,y,z)=2\Phi(z,x,y)+2\Phi(z,Px,Py),\label{NPhi}\nonumber\\
  \wh{N}(x,y,z)=-2\Phi(x,y,z)-2\Phi(Px,Py,z).\label{wNPhi}
\end{gather}
Taking into account \eqref{tPPhi}, the latter equalities imply the following
\begin{gather}
  \tN(x,y,z)=-N(x,Py,z),\label{tNN}\\
  \widetilde{\wh{N}}(x,y,z)=-\wh{N}(x,y,Pz).\label{twNwN}
\end{gather}
By virtue of \eqref{NJ}, \eqref{NtN3}, \eqref{str-P} and \eqref{str-g}, we establish the truthfulness of  the property
$N(x, y,z) = -N(x, Py, Pz)$ which is equivalent to $N(x,Py,z) = -N(x,y, Pz)$. Then \eqref{tNN} gets the form
\begin{gather}
  \tN(x,y,z)=N(x,y,Pz).\label{tNN2}
\end{gather}
From \eqref{tNN2} and \eqref{twNwN} we obtain the relations in the statement.
\end{proof}

The following relation between the curvature tensors of $\n$ and $\tn$ related by \eqref{tn=nPhi} is well-known 
\begin{equation}\label{tRRS}
    \tR(x,y)z=R(x,y)z+Q(x,y)z,
\end{equation}
where
\begin{equation}\label{Q}
    Q(x,y)z= \left(\n_x \Phi\right)(y,z)- \left(\n_y \Phi\right)(x,z)
    +\Phi\left(x,\Phi(y,z)\right)-\Phi\left(y,\Phi(x,z)\right).
\end{equation}

The following tensor is a part of the tensor $Q$
\begin{equation}\label{B}
B(x,y)z=\Phi(x,\Phi(y,z))-\Phi(y,\Phi(x,z)).
\end{equation}
\begin{lem}\label{lem:B}
  The tensor $B(x,y)z$ is invariant under the twin interchange, i.e.
\begin{equation}\label{B=tA}
B(x,y)z=\tB(x,y)z.
\end{equation}
\end{lem}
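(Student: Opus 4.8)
The plan is to exploit the fact that the tensor $B$ in \eqref{B} is built purely algebraically from the potential $\Phi$, with no explicit appearance of the metric $g$. Consequently, the twin-interchanged tensor $\tB$ is obtained simply by replacing every occurrence of $\Phi$ by its counterpart $\tP$ for $(M,P,\tg)$, so that
\[
\tB(x,y)z=\tP\bigl(x,\tP(y,z)\bigr)-\tP\bigl(y,\tP(x,z)\bigr).
\]
First I would write this expression down and then invoke the anti-invariance of the potential established in \lemref{lem:Phi}, namely $\tP(x,y)=-\Phi(x,y)$.

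The decisive observation is that $B$ is \emph{quadratic} in $\Phi$, so the two sign changes coming from the interchange cancel. Using that $\tP(x,\cdot)$ is tensorial (linear over functions) in its second argument, I would compute each nested term separately. For the first term, \eqref{tP=-P} applied to the inner slot gives $\tP(y,z)=-\Phi(y,z)$, and applied to the outer slot, together with linearity, yields
\[
\tP\bigl(x,\tP(y,z)\bigr)=\tP\bigl(x,-\Phi(y,z)\bigr)=-\tP\bigl(x,\Phi(y,z)\bigr)=\Phi\bigl(x,\Phi(y,z)\bigr).
\]
The identical manipulation on the second term produces $\tP\bigl(y,\tP(x,z)\bigr)=\Phi\bigl(y,\Phi(x,z)\bigr)$. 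Subtracting, I recover precisely the right-hand side of \eqref{B}, which proves \eqref{B=tA}.

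The only point requiring care — rather than a genuine obstacle — is the parity bookkeeping: each of the two applications of the anti-invariant tensor $\Phi$ contributes a factor $-1$ under the interchange, and since there are exactly two such applications, the total factor is $(-1)^2=+1$, giving invariance. I do not expect any analytic difficulty here, because $B$ is exactly the zeroth-order (non-differential) part of the curvature difference $Q$ in \eqref{Q}; the derivative contribution $\left(\n_x\Phi\right)(y,z)-\left(\n_y\Phi\right)(x,z)$ is linear in $\Phi$ and hence anti-invariant, and must be treated separately, but it lies outside the present claim.
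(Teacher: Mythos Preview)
Your proof is correct and follows exactly the same approach as the paper's own proof, which simply states that \eqref{B=tA} follows directly from \eqref{tP=-P} and \eqref{B}. You have spelled out in detail the parity bookkeeping that the paper leaves implicit, namely that $B$ is quadratic in $\Phi$ so the two sign changes from \lemref{lem:Phi} cancel.
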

\begin{proof}
It follows directly from from \eqref{tP=-P} and \eqref{B}.
\end{proof}

\begin{lem}\label{lem:Q}
  The tensor $Q(x,y)z$ is anti-invariant under the twin interchange, i.e.
\begin{equation}\label{tS=-Q}
    \tQ(x,y)z=-Q(x,y)z.
\end{equation}
\end{lem}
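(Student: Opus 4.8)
The plan is to decompose the tensor $Q$ from \eqref{Q} into two natural pieces: the curvature-like part $B(x,y)z$ defined in \eqref{B}, which is already known to be invariant by \lemref{lem:B}, and the remaining derivative part
\[
A(x,y)z=\left(\n_x\Phi\right)(y,z)-\left(\n_y\Phi\right)(x,z),
\]
so that $Q=A+B$. Since $\tQ=\tA+\tB$ and $\tB=B$ by \eqref{B=tA}, the desired anti-invariance $\tQ=-Q$ is equivalent to showing $\tA=-A-2B$; it will therefore \emph{not} suffice to treat the two summands of $Q$ independently, and the genuine content of the lemma lies in the derivative term. This is the step I expect to be the main obstacle.

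First I would establish the anti-invariance of the covariant-derivative block. The key tool is \lemref{lem:Phi}, giving $\tP=-\Phi$, together with the relation $\tn_x y=\n_x y+\Phi(x,y)$ from \eqref{tn=nPhi}. Writing out $\bigl(\tn_x\tP\bigr)(y,z)$ by the product rule, I would get
\[
\bigl(\tn_x\tP\bigr)(y,z)=\tn_x\bigl(\tP(y,z)\bigr)-\tP\bigl(\tn_x y,z\bigr)-\tP\bigl(y,\tn_x z\bigr),
\]
then substitute $\tP=-\Phi$ and expand each $\tn$ via \eqref{tn=nPhi}. The terms in which $\tn$ contributes an extra $\Phi$ will produce precisely the quadratic combinations $\Phi(x,\Phi(y,z))$ etc., which I would recognise as the $B$-type contributions; the purely $\n$-based terms reassemble into $-\left(\n_x\Phi\right)(y,z)$. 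Antisymmetrising in $x$ and $y$ as in the definition of $A$, and keeping careful track of which quadratic pieces survive, should yield $\tA(x,y)z=-A(x,y)z-2B(x,y)z$.

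Adding this to $\tB=B$ gives $\tQ=\tA+\tB=-A-2B+B=-A-B=-Q$, which is exactly \eqref{tS=-Q}. An alternative, cleaner route that avoids reassembling the covariant derivative by hand is to observe that \eqref{tRRS} reads $\tR=R+Q$ for the passage $\n\to\tn$; applying the same formula to the reverse passage $\tn\to\n$ (whose potential is $\tP=-\Phi$ by \lemref{lem:Phi}) gives $R=\tR+\tQ$. Subtracting the two relations immediately yields $\tQ=-Q$ without any term-by-term computation, since $R$ and $\tR$ cancel. I would present this second argument as the main proof because it is short and transparent, and use the first, more computational derivation only as a consistency check on the intermediate identity $\tA=-A-2B$. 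The single point requiring care in the clean argument is the justification that \eqref{tRRS} is symmetric under interchanging the roles of the two connections with the potential replaced by its negative — this is immediate from the fact that $\Phi$ is exactly the potential of $\tn$ relative to $\n$, so $-\Phi=\tP$ is the potential of $\n$ relative to $\tn$.
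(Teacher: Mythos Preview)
Your proposal is correct. Your computational (first) approach is essentially the paper's own proof: the paper expands $(\tn_x\tP)(y,z)$ via $\tP=-\Phi$ and $\tn=\n+\Phi$, obtains exactly the intermediate identity you anticipated,
\[
(\tn_x\tP)(y,z)-(\tn_y\tP)(x,z)=-\left(\n_x\Phi\right)(y,z)+\left(\n_y\Phi\right)(x,z)-2B(x,y)z,
\]
labels it \eqref{nP}, and then combines it with $\tB=B$ to conclude $\tQ=-Q$.

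Your second, curvature-based argument is a genuinely different and shorter route not taken in the paper. Reading \eqref{tRRS} as the general change-of-connection formula and applying it in both directions (potential $\Phi$ one way, $\tP=-\Phi$ the other) gives $\tR=R+Q$ and $R=\tR+\tQ$, whence $\tQ=-Q$ by addition. This bypasses all explicit manipulation of covariant derivatives and uses only \lemref{lem:Phi}. The trade-off is that the paper's computation produces the refined identity \eqref{nP} as a by-product, which is cited again in the proof of \propref{prop:inv.tensor2}; your slick argument delivers the lemma but not this intermediate relation. One cosmetic point: your auxiliary symbol $A$ for the derivative part of $Q$ collides with the paper's later use of $A$ for the average of $R$ and $\tR$, so rename it before merging.
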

\begin{proof}
Applying \eqref{tn=nPhi} and \eqref{tP=-P} to the formula for the covariant derivative of $\tP$ with respect to $\tn$, i.e.
$
\left(\tn_x\tP\right)(y,z)=\tn_x\tP(y,z)-\tP(\n_x y,z)-\tP(y,\n_x z),
$ we get
\[
(\tn_x\tP)(y,z)=-\left(\n_x\Phi\right)(y,z)-\Phi(x,\Phi(y,z))+\Phi(y,\Phi(x,z))+\Phi(z,\Phi(x,y)).
\]
The latter equality and \eqref{B} yield
\begin{equation}\label{nP}
\begin{array}{l}
    (\tn_x\tP)(y,z)-(\tn_y\tP)(x,z)=-\left(\n_x\Phi\right)(y,z)+\left(\n_y\Phi\right)(x,z)
    -2{B}(x,y)z.
\end{array}
\end{equation}
Then, equalities \eqref{Q}, \eqref{B}, \eqref{B=tA} and \eqref{nP} imply \eqref{tS=-Q}.
\end{proof}

\begin{prop}\label{prop:inv.tensor2}
    The curvature tensor $K$ of the average connection $D$ for $\n$ and $\tn$ is an invariant tensor under the twin interchange, i.e. $K(x,y)z=\widetilde{K}(x,y)z$.
\end{prop}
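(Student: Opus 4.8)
The plan is to capitalize on \propref{prop:inv.conn}, which already establishes $\widetilde{D}=D$: the average connection is literally fixed by the twin interchange. Since the curvature $K$ is manufactured from $D$ alone via $K(x,y)z=D_xD_yz-D_yD_xz-D_{[x,y]}z$ and the Lie bracket carries no metric information, recomputing the curvature after the swap $\n\leftrightarrow\tn$, $g\leftrightarrow\tg$ simply returns the curvature of the same operator $D$. So at the conceptual level $\widetilde{K}=K$ is immediate; the remaining work is only to present this invariance through the classical curvature data so that \lemref{lem:B} and \lemref{lem:Q} can be invoked.

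To that end I would express $K$ in terms of $R$. As \eqref{hn=nP} is exactly \eqref{tn=nPhi} with $\tfrac12\Phi$ in place of $\Phi$, the general relation \eqref{tRRS}--\eqref{Q} applies after this rescaling and gives
\[
K(x,y)z=R(x,y)z+\tfrac12\bigl\{(\n_x\Phi)(y,z)-(\n_y\Phi)(x,z)\bigr\}+\tfrac14 B(x,y)z,
\]
with $B$ the tensor from \eqref{B}. Identifying the first-order part as $Q-B$ by \eqref{Q} and \eqref{B}, this collapses to $K=R+\tfrac12 Q-\tfrac14 B$, which, using $\tR=R+Q$, can be recast in the symmetric form $K=\tfrac12(R+\tR)-\tfrac14 B$.

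Finally I would feed in the three transformation laws already in hand: $\tR=R+Q$ from \eqref{tRRS}, the anti-invariance $\tQ=-Q$ from \lemref{lem:Q}, and the invariance $\tB=B$ from \lemref{lem:B}. These yield
\[
\widetilde{K}=\tR+\tfrac12\tQ-\tfrac14\tB=(R+Q)-\tfrac12 Q-\tfrac14 B=R+\tfrac12 Q-\tfrac14 B=K,
\]
and in the form $K=\tfrac12(R+\tR)-\tfrac14 B$ the invariance is transparent, since $\tfrac12(R+\tR)$ is stable under $R\leftrightarrow\tR$ and $B$ is invariant. The one genuinely technical step, which I expect to be the main obstacle, is verifying the displayed curvature formula for $D$: one must confirm that halving the potential in \eqref{tn=nPhi} rescales the derivative part of $Q$ by $\tfrac12$ and the quadratic part $B$ by $\tfrac14$. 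This is a routine but bracket-careful expansion of $D_xD_yz-D_yD_xz-D_{[x,y]}z$; once settled, \lemref{lem:B}, \lemref{lem:Q} and \eqref{tRRS} close the proof.
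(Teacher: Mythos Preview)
Your proposal is correct. The formula-based portion---deriving $K=R+\tfrac12 Q-\tfrac14 B$ by rescaling the potential in \eqref{tn=nPhi} and then applying \eqref{tRRS}, \lemref{lem:Q} and \lemref{lem:B}---is exactly the paper's argument; the paper records this identity as \eqref{hRRSA} and closes with the same three transformation laws.

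Your opening paragraph, however, offers a genuinely shorter alternative that the paper does not use: since \propref{prop:inv.conn} gives $\widetilde D=D$ as affine connections, and the curvature $K(x,y)z=D_xD_yz-D_yD_xz-D_{[x,y]}z$ is built from $D$ and the Lie bracket alone, invariance of $K$ is immediate without ever writing $K$ in terms of $R$, $Q$, $B$. This bypasses \lemref{lem:B}, \lemref{lem:Q} and the rescaling computation entirely. What the paper's route buys in exchange is the explicit decomposition \eqref{hRRSA}, which it needs anyway for \coref{cor:inv.tensor2} and for the relation \eqref{wRbRPhi} to $A$; your direct argument proves the proposition but would not by itself yield that formula.
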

\begin{proof}
By virtue of \eqref{hn=nP}, \eqref{tn=nPhi}, \eqref{tRRS}, \eqref{Q} and \eqref{B}, we get the formula 
\begin{equation}\label{hRRSA}
    K(x,y)z=R(x,y)z+\frac12Q(x,y)z
    -\frac14 B(x,y)z.
\end{equation}
Taking into account \eqref{tRRS}, \eqref{B=tA}, \eqref{tS=-Q} and \eqref{nP}, we establish the searched invariance.
\end{proof}

The following assertion is true due to \eqref{hRRSA}.
\begin{cor}\label{cor:inv.tensor2}
    The invariant tensor $K$ vanishes if and only if
    \[
    R(x,y)z=-\frac12 Q(x,y)z+\frac14 B(x,y)z.
    \]
\end{cor}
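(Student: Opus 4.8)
The plan is to read off the statement directly from the explicit formula for $K$ established in \propref{prop:inv.tensor2}. Since Corollary~\ref{cor:inv.tensor2} is flagged as being ``true due to \eqref{hRRSA}'', the entire proof should hinge on that single decomposition
\[
K(x,y)z=R(x,y)z+\frac12 Q(x,y)z-\frac14 B(x,y)z,
\]
and nothing deeper is needed. This is an iff statement about the vanishing of $K$, so I would treat it as a trivial algebraic rearrangement of a known identity rather than as a genuine geometric result.

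First I would observe that $K=0$ means, for all $x$, $y$, $z$, that
\[
R(x,y)z+\frac12 Q(x,y)z-\frac14 B(x,y)z=0,
\]
by substituting $K(x,y)z=0$ into \eqref{hRRSA}. Then I would simply isolate the curvature term $R(x,y)z$ by transposing the remaining two summands to the right-hand side, obtaining
\[
R(x,y)z=-\frac12 Q(x,y)z+\frac14 B(x,y)z,
\]
which is exactly the condition in the statement. Conversely, if this last equality holds, then substituting it back into \eqref{hRRSA} makes the right-hand side collapse to zero, so $K(x,y)z=0$. Both directions are therefore immediate consequences of \eqref{hRRSA} being an identity that holds for \emph{every} $\W$-manifold, so no further hypotheses or class restrictions come into play.

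There is essentially no main obstacle here: the work was already done in proving \propref{prop:inv.tensor2}, where the formula \eqref{hRRSA} for the curvature $K$ of the average connection $D$ was derived from \eqref{hn=nP}, \eqref{tn=nPhi}, \eqref{tRRS}, \eqref{Q} and \eqref{B}. The only point worth a moment's care is to confirm that \eqref{hRRSA} is a genuine pointwise tensor identity valid for all arguments, so that the equivalence $K=0 \Leftrightarrow R=-\tfrac12 Q+\tfrac14 B$ is a pure rearrangement with no hidden loss of generality. Given that, the corollary follows at once, and I would present the proof in one or two sentences citing \eqref{hRRSA}.
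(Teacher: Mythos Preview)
Your proposal is correct and matches the paper's own approach exactly: the paper does not even supply a proof environment, merely noting that the assertion is ``true due to \eqref{hRRSA}''. Your argument is simply the explicit rearrangement of that identity, so nothing needs to be added or changed.
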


Let $A$ be the average tensor  of  $R$ and $\tR$, i.e.
$A(x,y)z=\frac12\{R(x,y)z+\tR(x,y)z\}$.
Then, because of \eqref{tRRS}, we have
\begin{equation}\label{bR=RS}
    A(x,y)z=R(x,y)z+\frac12 Q(x,y)z.
\end{equation}

\begin{prop}\label{prop:inv.tensor}
    The average tensor $A$ of $R$ and $\tR$ is an invariant tensor under the twin interchange, i.e. $A(x,y)z=\widetilde{A}(x,y)z$.
\end{prop}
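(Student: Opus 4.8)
The plan is to show directly that the average tensor $A$ equals its twin $\widetilde A$ by reducing the claim to invariances already established. First I would write down the analogous decomposition of $\widetilde A$ for the manifold $(M,P,\tg)$. By definition, the twin interchange swaps the roles of $\n$ and $\tn$, so under this swap $R$ becomes $\tR$ and $\tR$ becomes $R$; hence the average tensor $A(x,y)z=\frac12\{R(x,y)z+\tR(x,y)z\}$ is manifestly symmetric in the two curvature tensors and should be invariant essentially by inspection. The substantive task is to verify that the decomposition \eqref{bR=RS} is consistent with this: applying the twin interchange to the right-hand side of \eqref{bR=RS} must reproduce the same expression.

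Concretely, the key steps in order are as follows. I would start from \eqref{bR=RS}, namely $A(x,y)z=R(x,y)z+\frac12 Q(x,y)z$, and compute $\widetilde A(x,y)z=\tR(x,y)z+\frac12\tQ(x,y)z$ by the same formula applied to the twin structure. Using \eqref{tRRS} to replace $\tR(x,y)z=R(x,y)z+Q(x,y)z$, and using \lemref{lem:Q}, specifically \eqref{tS=-Q}, to replace $\tQ(x,y)z=-Q(x,y)z$, I obtain
\[
\widetilde A(x,y)z=R(x,y)z+Q(x,y)z+\tfrac12\bigl(-Q(x,y)z\bigr)=R(x,y)z+\tfrac12 Q(x,y)z,
\]
which is exactly $A(x,y)z$ by \eqref{bR=RS}. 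This establishes the claimed invariance.

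The structure here parallels the proof of \propref{prop:inv.tensor2} for the curvature $K$ of the average connection, where the correction terms $Q$ and $B$ were combined so that the anti-invariant parts cancel. The only genuine content is the cancellation of the $Q$-contribution: the full tensor $Q$ carries both the covariant-derivative terms and the quadratic term $B$, and I must make sure I am invoking the correct sign. Since \lemref{lem:Q} already packages the entire tensor $Q$ as anti-invariant, there is no need to separate $B$ from the derivative terms, and the computation is immediate.

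The main obstacle, such as it is, lies not in this final step but in being certain that \eqref{tS=-Q} and \eqref{tRRS} are being applied in mutually consistent conventions for the twin interchange. In particular, one must confirm that $\widetilde{\widetilde R}=R$ and that the formula \eqref{bR=RS} written for the twin data does employ $\tQ$ rather than some reinterpretation of $Q$; once this is granted, the cancellation is forced and the result follows without further calculation.
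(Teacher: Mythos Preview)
Your proof is correct and follows exactly the same approach as the paper: the paper's proof simply cites \eqref{tRRS}, \eqref{tS=-Q} and \eqref{bR=RS}, which is precisely the combination you spell out. Your additional remarks about consistency of conventions and the parallel with \propref{prop:inv.tensor2} are sound but not needed for the argument.
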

\begin{proof}
Using \eqref{tRRS}, \eqref{tS=-Q} and \eqref{bR=RS},  we obtain the searched invariance.
\vskip-1em
\end{proof}

The following statement is an immediate consequence of \eqref{bR=RS}.
\begin{cor}\label{cor:bR=0}
    The invariant tensor $A$ vanishes if and only if $R=-\frac12Q$ is valid.
\end{cor}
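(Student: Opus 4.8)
The plan is to obtain the equivalence directly from the structural identity \eqref{bR=RS}, which the paper has already established by combining \eqref{tRRS} with the definition of the average tensor $A(x,y)z=\frac12\{R(x,y)z+\tR(x,y)z\}$. That identity reads $A(x,y)z = R(x,y)z + \frac12 Q(x,y)z$ and holds for all vector fields $x$, $y$, $z$; hence the whole argument reduces to an algebraic rearrangement of a single equation, with no geometric input required beyond \eqref{bR=RS}.

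For the forward direction I would assume $A=0$, meaning $A(x,y)z=0$ identically in its arguments. Substituting into \eqref{bR=RS} gives $R(x,y)z + \frac12 Q(x,y)z = 0$ for all $x$, $y$, $z$, and isolating the curvature term yields $R(x,y)z = -\frac12 Q(x,y)z$, that is $R = -\frac12 Q$. Conversely, assuming $R = -\frac12 Q$ and plugging this relation back into \eqref{bR=RS} produces $A(x,y)z = -\frac12 Q(x,y)z + \frac12 Q(x,y)z = 0$, so $A$ vanishes. Both implications thus follow by inspection of the same formula.

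I do not anticipate any genuine obstacle here: the content of the corollary is entirely carried by the previously derived formula \eqref{bR=RS}, and once that is in hand the statement is a one-line equivalence. The only point worth keeping in mind is that \eqref{bR=RS} is itself a consequence of the relation \eqref{tRRS} between the curvature tensors of $\n$ and $\tn$, so the corollary inherits its validity on an arbitrary $\W$-manifold without any further hypotheses.
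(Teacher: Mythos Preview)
Your proposal is correct and mirrors the paper's own treatment: the corollary is stated there as an immediate consequence of \eqref{bR=RS}, and your argument is precisely the algebraic rearrangement of that identity in both directions.
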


From \eqref{hRRSA} and \eqref{bR=RS}, we have the following relation between the invariant tensors $K$, $A$ and $B$
\begin{equation}\label{wRbRPhi}
    K(x,y)z=A(x,y)z
    -\frac14 B(x,y)z.
\end{equation}

\begin{thm}\label{thm:inv.tensors}
    Any linear combination of the invariant tensors $A$ and $K$ is an invariant tensor under the twin interchange.
\end{thm}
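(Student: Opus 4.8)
The plan is to exploit the fact that the twin interchange acts $\R$-linearly on tensors of a fixed type, so that the collection of invariant tensors forms a vector subspace and is therefore automatically closed under linear combinations. Concretely, for a tensor $T$ of type $(1,3)$ assembled from the geometric data $(g,\n)$, its counterpart $\widetilde{T}$ under the twin interchange is produced by carrying out the identical construction with the swapped data $(\tg,\tn)$. Since this construction is multilinear and the passage $T\mapsto\widetilde{T}$ sends sums to sums and scalar multiples to scalar multiples, the correspondence $T\mapsto\widetilde{T}$ is $\R$-linear on tensors of this type.

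First I would recall the two invariances already at hand: $\tA(x,y)z=A(x,y)z$ by \propref{prop:inv.tensor} and $\widetilde{K}(x,y)z=K(x,y)z$ by \propref{prop:inv.tensor2}. Next I would fix arbitrary real constants $\alpha,\beta$ and form the linear combination $L=\alpha A+\beta K$, which is again a $(1,3)$-tensor of the same type as $A$ and $K$. Applying the $\R$-linearity of the twin correspondence then gives $\widetilde{L}=\alpha\tA+\beta\widetilde{K}$, and substituting the two invariance relations yields $\widetilde{L}=\alpha A+\beta K=L$, which is exactly the claimed invariance of $L$.

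The only point meriting a word of justification, rather than a genuine obstacle, is the $\R$-linearity of the operation $T\mapsto\widetilde{T}$. This is immediate here because $A$ and $K$ are tensors of the same type and the twin interchange is an involution on the space of such tensors; equivalently, the invariant tensors are precisely the fixed points of this involution, and the fixed-point set of a linear involution is a vector subspace, hence stable under linear combinations. Consequently no computation beyond the two established invariances is required, and one may note in passing that the same reasoning, combined with the relation \eqref{wRbRPhi} and \lemref{lem:B}, shows that $B$ lies in this same invariant subspace.
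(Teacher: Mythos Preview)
Your proposal is correct and follows essentially the same approach as the paper: the paper's proof consists of the single sentence ``It follows from \propref{prop:inv.tensor2} and \propref{prop:inv.tensor},'' and your argument simply makes explicit the (obvious) linearity step that the paper leaves implicit. Your additional remark about $B$ via \eqref{wRbRPhi} and \lemref{lem:B} is a valid observation but is not needed for the theorem as stated.
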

\begin{proof}
It follows from \propref{prop:inv.tensor2} and \propref{prop:inv.tensor}.
\end{proof}

\section{Invariant connection and invariant tensors on the $\W$-manifolds in the main class}

Let us consider an arbitrary $\W$-manifold $(M,P,g)$ belonging to the basic class $\W_1$. Then we call that $(M,P,g)$ is a $\W_1$-manifold. This class is known as the main class in the classification in \cite{StaGri92}. The reason is that it is the only class where the fundamental tensor $F$ and the potential $\Phi$ are expressed explicitly by the metrics. Moreover, $\W_1$ has a special role with respect to conformal transformations of the metrics $g$ and $\tg$. 

Let us consider the conformal transformations $\widebar{g}=e^{2u}(\cosh{2v}\ g+\sinh{2v}\ \tg)$ of the $P$-metric $g$, where $u$ and $v$ are differentiable functions on the $\W$-manifold \cite{StaGri92}.
If $v=0$, we obtain the usual conformal transformation.
Then, the associated $P$-metric $\tg$ has the following image $\widebar{\tg}=e^{2u}(\cosh{2v}\ \tg+\sinh{2v}\ g)$.
Let us note that it is impossible for $g$ and $\tg$ to correspond one another through some conformal transformation.
%
According to \cite{StaGri92}, the class $\W_1$ is closed with respect to conformal transformations. Moreover, a $\W_1$-manifold is locally conformal equivalent to a $\W_0$-manifold  if and only if its Lee forms $\ta$ and $\ta^*$ are closed, i.e. $\D\ta=\D\ta^*=0$. In the latter case, the conformal transformations used are such that the 1-forms $\D u\circ P$ and $\D v\circ P$ are closed and it is said that the manifold belongs to the subclass $\W_1^0$ of $\W_1$.

Bearing in mind \thmref{thm:inv.cl} and using \eqref{tgP} and \eqref{tata*},
we obtain the following form of  $F$ of a $\W_1$-manifold 
under the twin interchange
\begin{equation*}\label{W1:tF}
\begin{array}{l}
  \tF(x,y,z)=-\frac{1}{2n}\bigl\{
  g(x,y)\ta(Pz)+g(x,z)\ta(Py)
  -g(x,Py)\ta(z)-g(x,Pz)\ta(y)\bigr\}.
\end{array}
\end{equation*}
Therefore, we get the following relation for a $\W_1$-manifold
\begin{equation*}\label{W1:tFF}
  \tF(x,y,z)=F(Px,y,z).
\end{equation*}

%
%

According to  \eqref{class2} and \eqref{hn=nP}, the invariant connection $D$ on a $\W_1$-manifold has the following form
\begin{equation*}\label{W1:inv-n}
D_x y=\n_x y+\frac{1}{4n}\left\{g(x,y)f^{\sharp}-\tg(x,y)Pf^{\sharp}\right\},
\end{equation*}
where $f^{\sharp}$ is the dual vector of the 1-form $f$ regarding $g$, i.e. $f(z)=g(f^{\sharp},z)$.

Since $\Phi$ has an explicit expression on a $\W_1$-manifold, i.e.
\begin{equation}\label{PhiW1}
\Phi(x,y)=\frac{1}{2n}\left\{g(x,y)f^{\sharp}-\tg(x,y)Pf^{\sharp}\right\},
\end{equation}
we find the concrete form of $Q$ and $B$ defined by \eqref{Q} and  \eqref{B}, respectively.

\begin{prop}\label{prop:W1_R}
If $(M,P,g)$ belongs to the class $\W_1$, then the tensors $Q$ and $B$ have the following form, respectively:
\begin{equation*}\label{W1:Q}
\begin{array}{l}
Q(x,y)z=\frac{1}{2n}\bigl\{g(y,z)Sx-g(x,z)Sy+\tg(y,z)S^*x-\tg(x,z)S^*y\bigr\},
\end{array}
\end{equation*}
\begin{equation*}\label{W1:B}
\begin{array}{l}
B(x,y)z=\frac{1}{4n^2}\bigl\{g(y,z)Hx-g(x,z)Hy
-\tg(y,z)HPx+\tg(x,z)HPy\bigr\},
\end{array}
\end{equation*}
where
\[
Sx=\n_x f^{\sharp}+\frac{1}{2n}f(x)f^{\sharp},\qquad
S^*x=\n_x Pf^{\sharp}+\frac{1}{2n}f(Px)f^{\sharp},
\]
\[
Hx=f(x)f^{\sharp}-f(Px)Pf^{\sharp}.
\]
\end{prop}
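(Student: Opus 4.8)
The plan is to insert the explicit potential \eqref{PhiW1} into the defining identities \eqref{Q} and \eqref{B} and to sort the outcome by the four bilinear expressions $g(y,z)$, $g(x,z)$, $\tg(y,z)$, $\tg(x,z)$. Throughout I shall use the auxiliary identities that follow from \eqref{str-g} and \eqref{tgP}: since $P$ is $g$-self-adjoint, $g(x,Pf^{\sharp})=f(Px)$ and $g(x,f^{\sharp})=f(x)$, while $\tg(x,y)=g(x,Py)$ together with $P^{2}=\mathrm{id}$ gives $\tg(x,f^{\sharp})=f(Px)$ and $\tg(x,Pf^{\sharp})=f(x)$. These let me pair any of $g,\tg$ with $f^{\sharp}$ or $Pf^{\sharp}$ and recover only the scalars $f(x)$ and $f(Px)$.

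First I would handle $B$, which is purely algebraic. Substituting \eqref{PhiW1} into $\Phi(y,z)$ and then into $\Phi(x,\Phi(y,z))$ and applying the pairing identities, the inner metric factors collapse to $f$ and $f\circ P$, and one gets $\Phi(x,\Phi(y,z))=\frac{1}{4n^{2}}\{g(y,z)Hx-\tg(y,z)HPx\}$, where the two Greek pairings assemble precisely into $Hx=f(x)f^{\sharp}-f(Px)Pf^{\sharp}$ and $HPx=f(Px)f^{\sharp}-f(x)Pf^{\sharp}$. Antisymmetrizing in $x,y$ as in \eqref{B} then yields the stated formula for $B$ at once.

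For $Q$ I would split off the zeroth-order part, writing $Q(x,y)z=\{(\n_{x}\Phi)(y,z)-(\n_{y}\Phi)(x,z)\}+B(x,y)z$. Differentiating \eqref{PhiW1} and using $\n g=0$ leaves $(\n_{x}\Phi)(y,z)=\frac{1}{2n}\{g(y,z)\n_{x}f^{\sharp}-(\n_{x}\tg)(y,z)Pf^{\sharp}-\tg(y,z)\n_{x}(Pf^{\sharp})\}$. The essential point is that $\tg$ is not $\n$-parallel; a short computation from \eqref{F} shows $(\n_{x}\tg)(y,z)=F(x,z,y)$, so the middle term reinjects the fundamental tensor. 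Here I would insert the $\W_{1}$-expression for $F$ from \eqref{class} and rewrite the Lee forms through \eqref{fta} and \eqref{f-prop}, i.e. $\ta=-f^{*}$, $\ta^{*}=-f$ and $f^{*}(\cdot)=-f(P\cdot)$, so that $F(x,z,y)$ becomes an expression in $g$, $\tg$, $f$ and $f\circ P$ alone.

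Finally I would assemble $Q$ by adding the antisymmetrized first-order part to $B$ and collecting the coefficients of $g(y,z),g(x,z),\tg(y,z),\tg(x,z)$. The only real work — and the step I expect to be the main obstacle — is the cancellation bookkeeping among the zeroth-order contributions: the $f(Px)Pf^{\sharp}$- and $f(x)Pf^{\sharp}$-type terms produced by the $(\n\tg)$-correction must cancel exactly against the corresponding pieces of $Hx$ and $HPx$ coming from $B$, so that each coefficient reduces to a covariant derivative plus a single $\frac{1}{2n}$-multiple of $f^{\sharp}$ or $Pf^{\sharp}$. These surviving combinations are precisely $Sx=\n_{x}f^{\sharp}+\frac{1}{2n}f(x)f^{\sharp}$ and $S^{*}x=\n_{x}Pf^{\sharp}+\frac{1}{2n}f(Px)f^{\sharp}$, the delicate point being to track correctly the signs of the $\tg$-terms; this produces the form of $Q$ in the statement.
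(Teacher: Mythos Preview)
Your proposal is correct and follows essentially the same approach as the paper: the paper's proof merely cites ``direct computations, using \eqref{class}, \eqref{PhiW1}, \eqref{Q} and \eqref{B}'', and you have spelled out exactly those computations --- substituting the explicit $\W_1$-potential \eqref{PhiW1} into the definitions of $B$ and $Q$, handling the non-parallelism of $\tg$ via $(\n_x\tg)(y,z)=F(x,y,z)$ together with the $\W_1$-form of $F$ from \eqref{class}, and regrouping. There is nothing to add.
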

\begin{proof}
The formulae follow by direct computations, using \eqref{class}, \eqref{PhiW1}, \eqref{Q} and \eqref{B}.
\end{proof}

The expressions of $Q$ and $B$ in \propref{prop:W1_R}
 are substituted in the relations between
$R$ on the one hand and $\tR$, $K$, $A$ on the other,
given in \eqref{tRRS}, \eqref{hRRSA}, \eqref{bR=RS}, respectively.


\section{Lie group as a $\W_1$-manifold and its invariant objects under twin interchange}
\label{sec_3}

Let $L$ be a 4-dimensional real connected Lie group, and
$\mathfrak{l}$ be its Lie algebra with a basis
$\{X_{1},X_{2},X_{3},X_{4}\}$.

We introduce an almost paracomplex structure
$P$ and a $P$-metric $g$ by
\begin{equation}\label{Jdim4}
\begin{array}{llll}
PX_{1}=X_{2}, \quad & PX_{2}=X_{1}, \quad & PX_{3}=X_{4},
\quad &
PX_{4}=X_{3},
\end{array}
\end{equation}
\begin{equation}\label{g}
\begin{array}{c}
  g(X_1,X_1)=g(X_2,X_2)=-g(X_3,X_3)=-g(X_4,X_4)=1, \\[4pt]
  g(X_i,X_j)=0,\; i\neq j.
\end{array}
\end{equation}
Then, the associated $P$-metric $\tg$ of $g$  is determined by its non-zero components
\begin{equation}\label{tg}
\begin{array}{c}
  \tg(X_1,X_2)=-\tg(X_3,X_4)=1.
\end{array}
\end{equation}

Let us consider the constructed $\W$-manifold $(L,P,g)$ with the Lie algebra $\mathfrak{l}$
determined by the following nonzero commutators:
\begin{equation}\label{lie-w1-2}
\begin{array}{l}
\left[X_{1},X_{4}\right]=[X_{3},X_{2}]=\lm_{1}X_{1}+\e\lm_{1}X_{2}+\lm_{2}X_{3}+\e\lm_{2}X_{4},\\[4pt]
\left[X_{1},X_{3}\right]=[X_{4},X_{2}]=-\e\lm_{1}X_{1}-\lm_{1}X_{2}+\e\lm_{2}X_{3}+\lm_{2}X_{4},\\[4pt]
\left[X_{1},X_{2}\right]=2\lm_{2}X_{1}+2\e\lm_{2}X_{2},\qquad
\left[X_{3},X_{4}\right]=2\lm_{1}X_{3}+2\e\lm_{1}X_{4},
\end{array}
\end{equation}
where $\lm_1,\lm_2\in\R$ and $\e\in\{1,-1\}$.

Obviously for any $i,j\in\{1,2,3,4\}$, the identity $[P X_i,P X_j]=-[X_i,X_j]$ holds, i.e. $P$ is an Abelian structure for
$\mathfrak{l}$. This condition is equivalent to the equality $[P X_i, X_j]=-[X_i,P X_j]$. Then, according to  \eqref{NJ}, we obtain $N=0$ and $\wh{N}=0$.

\begin{thm}\label{thm:W10-W}
Let $(L,P,g)$ and $(L,P,\tg)$ be the pair of $\W$-manifolds, determined by
\eqref{Jdim4}--\eqref{lie-w1-2}. Then both the manifolds:
\begin{enumerate}\renewcommand{\labelenumi}{(\roman{enumi})}
    \item belong to the class $\W_1$ for arbitrary $\lm_1$ and $\lm_2$; 
		\item belong to the class of the locally conformal
        $\W_0$-manifolds for arbitrary $\lm_1$ and $\lm_2$;
		\item belong to the class of isotropic $\W_0$-manifolds if and only if $\lm_1=\pm\lm_2$;
    \item   are scalar flat if and only if $\lm_1=\pm\lm_2$;
		\item belong to $\W_0$ if and only if $\lm_1=\lm_2=0$. 
\end{enumerate}
\end{thm}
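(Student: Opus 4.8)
The plan is to reduce every assertion to explicit algebra on the Lie algebra $\mathfrak{l}$, and then transfer the relevant statements to the twin manifold $(L,P,\tg)$ by the invariance results of Section~2. First I would compute the Levi-Civita connection $\n$ of $g$ from the Koszul formula, which for the left-invariant fields $X_i$ (with $g(X_i,X_j)$ constant) collapses to
\begin{equation*}
2g(\n_{X_i}X_j,X_k)=g([X_i,X_j],X_k)+g([X_k,X_i],X_j)+g([X_k,X_j],X_i),
\end{equation*}
so that the commutators \eqref{lie-w1-2} determine all $\n_{X_i}X_j$. From these I would read off the fundamental tensor through $F(X_i,X_j,X_k)=g\bigl(\n_{X_i}(PX_j)-P(\n_{X_i}X_j),X_k\bigr)$, using that $P$ has constant components in the adapted basis \eqref{Jdim4}. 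Every subsequent quantity --- the Lee form $\ta$, the square norm $\nP$, the curvature $R$ and its scalar $\tau$ --- is then a finite contraction of these components.

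For (i) I would compute $\ta(z)=g^{ij}F(X_i,X_j,z)$ together with $\ta^*=-\ta\circ P$ and verify, slot by slot on the basis, that $F$ coincides with the $\W_1$ expression in \eqref{class} for $2n=4$. The vanishing of $N$, guaranteed by the Abelian condition $[PX_i,PX_j]=-[X_i,X_j]$, is a convenient consistency check here, since $N=0$ holds on every $\W_1$-manifold. For (v), once $F$ is known to have the $\W_1$ form, $F=0$ is equivalent to $\ta=0$; it therefore suffices to exhibit $\ta$ explicitly and observe that its nonzero components are constant multiples of $\lm_1$ and $\lm_2$, so that $\W_0$ holds precisely when $\lm_1=\lm_2=0$.

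For (ii) I would invoke the criterion recalled in Section~3: within $\W_1$ the manifold is locally conformal to a $\W_0$-manifold if and only if $\D\ta=\D\ta^*=0$. Since $\ta$ has constant components on left-invariant fields, $\D\ta(X_i,X_j)=-\ta([X_i,X_j])$, so the claim reduces to checking that $\ta$ (and likewise $\ta^*$) annihilates every commutator in \eqref{lie-w1-2}. For (iii) I would evaluate $\nP=g^{ij}g^{kl}g^{st}F_{iks}F_{jlt}$ directly; I expect it to come out as a constant multiple of $\lm_1^2-\lm_2^2$, whence $\nP=0$ exactly when $\lm_1=\pm\lm_2$. For (iv) I would form $R(X_i,X_j)X_k=\n_{X_i}\n_{X_j}X_k-\n_{X_j}\n_{X_i}X_k-\n_{[X_i,X_j]}X_k$, contract to the Ricci tensor $\rho$ and then to $\tau$, and again isolate the factor $\lm_1^2-\lm_2^2$.

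The twin manifold $(L,P,\tg)$ requires little extra work: \thmref{thm:inv.cl} makes the membership in $\W_1$ and in $\W_0$ twin-invariant, covering (i) and (v); \lemref{lem:f} gives $\widetilde{\ta}=\ta$ and $\widetilde{\ta}^*=\ta^*$, so the closedness criterion of (ii) transfers verbatim. For (iii) and (iv) I would recompute $\tnP$ and $\widetilde{\tau}$ for $\tg$ --- most economically from $\tR=R+Q$ in \eqref{tRRS} with $Q$ taken in the explicit $\W_1$-form of \propref{prop:W1_R} --- and check that they carry the same factor $\lm_1^2-\lm_2^2$. The main obstacle is purely the bookkeeping in (iv): the connection has many nonzero components and $R$ is quartic in $\lm_1,\lm_2$, so the real risk is an arithmetic slip rather than any conceptual difficulty; carrying out the contractions in a basis that simultaneously diagonalizes $g$ and $\tg$, such as the eigenbasis \eqref{eigen}, should keep the computation manageable.
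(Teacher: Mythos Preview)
Your plan is correct and follows essentially the same computational route as the paper: Koszul formula for $\n$, explicit $F$ (or equivalently $\Phi$), then the Lee forms, $\nP$, $R$, $\rho$, $\tau$, with the twin side handled by the invariance results of Section~2. The only minor deviations are that the paper verifies (i) via the $\Phi$-characterization \eqref{class2} rather than the $F$-characterization \eqref{class}, and for the twin manifold it simply computes $\tn$ directly alongside $\n$ and observes the clean relations $\tF_{ijk}=\e F_{ijk}$ and $\tR_{ijkl}=\e R_{ijkl}$, rather than going through $\tR=R+Q$; also, $R$ is quadratic (not quartic) in $\lm_1,\lm_2$, so the bookkeeping in (iv) is lighter than you anticipate.
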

\begin{proof}
According to \eqref{tgP}, \eqref{Jdim4}, \eqref{g}, \eqref{lie-w1-2} and the Koszul equality for ($g$, $\n$) and ($\tg$, $\tn$), we obtain the following nonzero components of $\n$ and $\tn$:
\begin{equation}\label{nabla3}
\begin{array}{l}
\n_{X_1}X_1=\e\n_{X_2}X_1=-\e\tn_{X_1}X_2=-\tn_{X_2}X_2\\[4pt]
\phantom{\n_{X_1}X_1}=-2\lm_2X_2-\e\lm_1X_3+\lm_1X_4,\\[4pt]
\n_{X_1}X_2=\e\n_{X_2}X_2=-\e\tn_{X_1}X_1=-\tn_{X_2}X_1\\[4pt]
\phantom{\n_{X_1}X_2}=2\lm_2X_1-\lm_1X_3+\e\lm_1X_4,\\[4pt]
\n_{X_1}X_3=-\e\n_{X_1}X_4=\e\n_{X_2}X_3=-\n_{X_2}X_4\\[4pt]
\phantom{\n_{X_1}X_3}=\tn_{X_1}X_3=-\e\tn_{X_1}X_4=\e\tn_{X_2}X_3=-\tn_{X_2}X_4\\[4pt]
\phantom{\n_{X_1}X_3}=-\e\lm_1X_1-\lm_1X_2,\\[4pt]
\n_{X_3}X_1=-\e\n_{X_3}X_2=\e\n_{X_4}X_1=-\n_{X_4}X_2\\[4pt]
\phantom{\n_{X_1}X_3}
=\tn_{X_3}X_1=-\e\tn_{X_3}X_2=\e\tn_{X_4}X_1=-\tn_{X_4}X_2\\[4pt]
\phantom{\n_{X_1}X_3}
=-\e\lm_2X_3-\lm_2X_4,\\[4pt]
\n_{X_3}X_3=\e\n_{X_4}X_3=-\e\tn_{X_3}X_4=-\tn_{X_4}X_4\\[4pt]
\phantom{\n_{X_3}X_3}=-\e\lm_2X_1+\lm_2X_2-2\lm_1X_4,\\[4pt]
\n_{X_3}X_4=\e\n_{X_4}X_4=-\e\tn_{X_3}X_3=-\tn_{X_4}X_3\\[4pt]
\phantom{\n_{X_3}X_4}=-\lm_2X_1+\e\lm_2X_2+2\lm_1X_3.
\end{array}
\end{equation}
Using \eqref{tn=nPhi}, \eqref{Jdim4}, \eqref{g} and \eqref{nabla3},    we get the components $\Phi_{ij}=\Phi(X_i,X_j)$ of the anti-invariant tensor $\Phi$ as well as $f_{k}=f(X_k)$ and $f^*_{k}=f^*(X_k)$ of its associated 1-forms. The nonzero of them are the following
\begin{equation}\label{Phiij}
\begin{array}{l}
\frac14 f^\sharp=\Phi_{11}=\e\Phi_{12}=\e\Phi_{21}=\Phi_{22}=-\Phi_{33}=-\e\Phi_{34}=-\e\Phi_{43}=-\Phi_{44}\\[4pt]
\phantom{\frac14 f^\sharp}=-2\e\lm_2 X_1+2\lm_2 X_2+2\e\lm_1 X_3-2\lm_1 X_4,\\[4pt]
f_1=-\e f_2=\e f^*_1=-f^*_2=-8\e\lm_2,\\[4pt]
f_3=-\e f_4=\e f^*_3=-f^*_4=-8\e\lm_1.
\end{array}
\end{equation}

Taking into account \eqref{class2}, \eqref{Jdim4}, \eqref{g}, \eqref{tg} and \eqref{Phiij}, we obtain that $(L,P,g)$ belongs to $\W_1$. 
Since the class $\W_1$ is invariant under the twin interchange, it means that $(L,P,\tg)$ belongs to  $\W_1$, too.
This completes the proof of (i).

The correctness of (v) is a consequence of \eqref{Phiij} and \eqref{tP=-P}.

The components of $\n P$ and $\tn P$ follow from \eqref{nabla3} and \eqref{Jdim4}. 
Then, using \eqref{g}, \eqref{tg} and \eqref{F}, we
get the components
$F_{ijk}=F(X_{i},X_{j},X_{k})$ and $\tF_{ijk}=\tF(X_{i},X_{j},X_{k})=\tg((\tn_{X_i}P)X_j,X_k)$
of $F$ and $\tF$, respectively. The nonzero of them are determined by the following equalities and the identity 
$\tF_{ijk}=\e F_{ijk}$ that holds for the constructed manifolds $(L,P,g)$ and $(L,P,\tg)$
\begin{equation}\label{lambdi}
\begin{array}{l}
2\lm_{1}=F_{113}=-F_{124}=F_{131}=-F_{142}=-\e F_{114}=\e F_{123}=\e F_{132}\\[4pt]
\phantom{2\lm_{1}}
=-\e F_{141}=\e F_{213}=-\e F_{224}=\e F_{231}=-\e F_{242}=-F_{214}= F_{223}\\[4pt]
\phantom{2\lm_{1}}
=F_{232}=- F_{241}
=-\frac12 F_{333}=\frac12 F_{344}=-\frac12\e F_{433}=\frac12\e F_{444},\\[4pt]
2\lm_{2}=-F_{313}=F_{324}=-F_{331}=F_{342}=-\e F_{314}=\e F_{323}=\e F_{332}\\[4pt]
\phantom{2\lm_{1}}
=-\e F_{341}=-\e F_{413}=\e F_{424}=-\e F_{431}=\e F_{442}=-F_{414}= F_{423}\\[4pt]
\phantom{2\lm_{1}}
=F_{432}=- F_{441}
=\frac12 F_{111}=-\frac12 F_{122}=\frac12\e F_{211}=-\frac12\e F_{222}.
\end{array}
\end{equation}

Applying \eqref{snorm} for the components of $F$ and $\tF$, we obtain the square norms of $\n
P$ and $\tn P$ as follows
\begin{equation}\label{nJ}
\nP=-\e \tnP=-128\left(\lm_1^2-\lm_2^2\right).
\end{equation}
Then, the latter equalities imply the statement (iii).

Taking into account \eqref{tata*}, we have
$\ta_k=\widetilde{\ta}_k$ and
$\ta^*_k=\widetilde{\ta}^*_k$ for the corresponding components with respect to $X_k$.
Furthermore, the same situation is for  $\D{\ta}=\D\widetilde{\ta}$ and $\D{\ta}^*=\D\widetilde{\ta}^*$.
By \eqref{ta}, \eqref{ta*taJ} and \eqref{lambdi}, we obtain
$\ta_k$ and $\ta^*_k$ and thus we get
\begin{equation}\label{theta123}
\begin{array}{ll}
\ta_{1}=-\e\ta_{2}=\e\ta^*_{1}=-\ta^*_{2}
=\widetilde{\ta}_{1}=-\e\widetilde{\ta}_{2}=\e\widetilde{\ta}^*_{1}=-\widetilde{\ta}^*_{2}=8\lm_{2}, \\[4pt]
\ta_{3}=-\e\ta_{4}=\e\ta^*_{3}=-\ta^*_{4}
=\widetilde{\ta}_{3}=-\e\widetilde{\ta}_{4}=\e\widetilde{\ta}^*_{3}=-\widetilde{\ta}^*_{4}=8\lm_{1}.
\end{array}
\end{equation}
Using \eqref{lie-w1-2} and \eqref{theta123}, we compute the components of
$\D\ta$ and $\D\ta^*$ with respect to the basis
$\{X_{1},X_{2},X_{3},X_{4}\}$. We obtain that the 1-forms $\ta$, $\ta^*$, $\widetilde\ta$, $\widetilde{\ta}^*$ are closed un\-con\-di\-tio\-nal\-ly, i.e. the statement (ii) holds.

By virtue of \eqref{g}, \eqref{lie-w1-2} and \eqref{nabla3}, we get  $R_{ijkl}=R(X_{i},X_{j},X_{k},X_{l})$ and $\tR_{ijkl}=\tR(X_{i},\allowbreak{}X_{j},\allowbreak{}X_{k},X_{l})$, the basic components of the curvature tensors
for $\n$ and $\tn$. The nonzero ones of them are determined by \eqref{curv} and the following:
\begin{equation}\label{R3}
\begin{array}{l}
R_{1221} = -2\e R_{1341} =-2\e R_{2342} = -8\lm_{2}^{2},   \\[4pt]
R_{1331} = R_{1441} =R_{2332} = R_{2442} = 4(\lm_{2}^{2} - \lm_{1}^{2}), \\[4pt]
R_{3443} = -2\e R_{3123} =-2\e R_{4124} = 8\lm_{1}^{2},\\[4pt]
R_{1241} =R_{2132} =-R_{3243}=-R_{4134} = -4\lm_{1}\lm_{2},\\[4pt]
R_{1231} =R_{2142} =-R_{3143}=-R_{4234} = 4\e\lm_{1}\lm_{2},
\end{array}
\end{equation}
\begin{equation}\label{tR3}
\begin{array}{l}
\tR_{ijkl}=\e R_{ijkl}.
\end{array}
\end{equation}
Therefore, the components of the Ricci tensors
and the values of the scalar curvatures
for $\n$ and $\tn$ are:
\begin{equation}\label{Ricci3}
\begin{array}{c}
\begin{array}{ll}
\rho_{11}=\rho_{22}=8\big(\lm_{1}^{2} -2 \lm_{2}^{2}\big), \qquad &
\widetilde{\rho}_{11}=\widetilde{\rho}_{22}=-\e\widetilde{\rho}_{12}=-8\lm_2^2,
\\[4pt]
\rho_{33}=\rho_{44}=8\big(\lm_{2}^{2} -2 \lm_{1}^{2}\big), \qquad &
\widetilde{\rho}_{33}=\widetilde{\rho}_{44}=-\e\widetilde{\rho}_{34}=-8\lm_1^2,\\[4pt]
\end{array}
\\[4pt]
\begin{array}{l}
\rho_{13}=\rho_{24}=-\e\rho_{14}=-\e\rho_{23}=\widetilde{\rho}_{13}=\widetilde{\rho}_{24}=-\e\widetilde{\rho}_{14}=-\e\widetilde{\rho}_{23}=-8\lm_{1}\lm_{2},
\end{array}
\\[4pt]
\begin{array}{lll}
\rho_{12}=\rho_{34}=0, \qquad &
\tau=48\big(\lm_{1}^{2}-\lm_{2}^{2}\big),\qquad &
\widetilde{\tau}=16\e\big(\lm_{2}^{2}-\lm_{1}^{2}\big).
\end{array}
\end{array}
\end{equation}


The truthfulness of (iv) follows immediately from the last two equations in \eqref{Ricci3}. This completes the proof. %
\end{proof}


\subsection{The invariant connection and invariant tensors under the twin interchange }

By virtue of \eqref{B} and \eqref{Phiij}, we establish that all basic components $B_{ijk}=B(X_i,X_j)X_k$ of the invariant tensor $B$ are zero and thus $B=0$ holds.

After that, we take in account \eqref{Q} and \eqref{Phiij} to calculate the basic components $Q_{ijk}=Q(X_i,X_j)X_k$ of the anti-invariant tensor $Q$. The nonzero of them are the following and the rest are determined by the property $Q_{ijk}=-Q_{jik}$:
\begin{equation}\label{Qij}
\begin{array}{l}
Q_{131} = -Q_{142} = Q_{232} = -Q_{241} = \frac12 Q_{344}=\frac12 \e \lm_{1}f^\sharp, \\[4pt]
Q_{132} = -Q_{141} = Q_{231} = -Q_{242} =\frac12 Q_{343}= \frac12 \lm_{1}f^\sharp, \\[4pt]
Q_{133} = Q_{144} = -Q_{234} = -Q_{243} =-\frac12 Q_{122}= \frac12 \e \lm_{2}f^\sharp, \\[4pt]
Q_{134} = Q_{143} = -Q_{233} = -Q_{244} =-\frac12 Q_{121} = \frac12 \lm_{2}f^\sharp.
\end{array}
\end{equation}

We compute the basic components $A_{ijk}=A(X_i,X_j)X_k$ of the invariant tensor $A$, using \eqref{R3}, \eqref{tR3} and that this tensor is the average tensor of $R$ and $\tR$. Thus we get the components $A_{ijkl}=g\left(A(X_i,X_j)X_k,X_l\right)$:
\begin{equation*} \label{Aijkl}
\begin{array}{l}
\begin{array}{l}
2\lm_{1}^{2} = A_{1324} = A_{1423} = A_{2314} = A_{2413} = -\frac12 A_{3434} =\frac12 A_{3443}\\[4pt]
\phantom{2\lm_{1}^{2}}
=\e A_{1323} = \e A_{1424} = \e A_{2313} = \e A_{2414} = -\frac12\e A_{3433} =\frac12\e A_{3444},\\[4pt]
2\lm_{2}^{2} = A_{1342} = A_{1432} = A_{2341} = A_{2431} = \frac12 A_{1212} =-\frac12 A_{1221}\\[4pt]
\phantom{2\lm_{2}^{2}}=\e A_{1341} = \e A_{1431} = \e A_{2342} = \e A_{2431} = 
\frac12\e A_{1211} =-\frac12\e A_{1222},\\[4pt]
\end{array}\\
\begin{array}{l}
2\lm_{1}\lm_{2} =\frac12 A_{1232}=-\frac12 A_{1241}=-\frac12 A_{3414}=\frac12 A_{3423}= -A_{1311} = A_{1322} 
\\[4pt]
\phantom{2\lm_{1}\lm_{2} }
=-A_{1333} =A_{1344} = A_{1412}= -A_{1421} =-A_{1434}=A_{1443}   
 \\[4pt]
\phantom{2\lm_{1}\lm_{2} }
 = -A_{2312}= A_{2321} =A_{2334} =-A_{2343}= A_{2411} = -A_{2422}   \\[4pt]
\phantom{2\lm_{1}\lm_{2} }
=A_{2433}=-A_{2444}=\frac12 \e A_{1231}=-\frac12 \e A_{1242}=-\frac12 \e A_{3413} \\[4pt]
\phantom{2\lm_{1}\lm_{2} }
=\frac12 \e A_{3424}= -\e A_{1312} = \e A_{1321}=-\e A_{1334} =\e A_{1343}= \e A_{1411}    
 \\[4pt]
\phantom{2\lm_{1}\lm_{2} }
= -\e A_{1422}=-\e A_{1433}=\e A_{1444} = -\e A_{2311}= \e A_{2322} =\e A_{2333}    \\[4pt]
\phantom{2\lm_{1}\lm_{2} }
=-\e A_{2344}= \e A_{2412}= -\e A_{2421} =\e A_{2434} =-\e A_{2443}, 
\\[4pt]
2\lm_{1}^2-4\lm_{2}^2 =A_{1313}= A_{1414}= A_{2323}= A_{2424} \\[4pt]
\phantom{2\lm_{1}^2-4\lm_{2}^2 }
= \e A_{1314} = \e A_{1413} = \e A_{2324} = \e A_{2423}\\[4pt]
2\lm_{2}^2-4\lm_{1}^2 =A_{1331}= A_{1441}= A_{2332}= A_{2442} \\[4pt]
\phantom{2\lm_{1}^2-4\lm_{2}^2 }
= \e A_{1332} = \e A_{1442} = \e A_{2331} = \e A_{2441}.
\end{array}
\end{array}
\end{equation*}
The rest components are determined by property $A_{ijkl}=-A_{jikl}$. Let us remark that $A$ is not a curvature-like tensor.

Obviously, $A=0$ if and only if the corresponding Lie algebra is Abelian and $(L,P,g)$ is a $\W_0$-manifold.

The Nijenhuis tensors $N$ and $\widetilde{N}$  on $(L,P,g)$ and $(L,P,\tg)$, respectively, vanish as on any $\W_1$-manifold.
According to \cite{StaGri92}, the condition $N=0$ is equivalent to the property  $\Phi(X_i,X_j)=\Phi(PX_i,PX_j)$. Then, by means of \eqref{wNPhi}, we obtain for the components of the associated Nijenhuis tensor  $\widetilde{\wh{N}}$ (similarly for $\wh{N}$) expressed by the components of the potential $\Phi$, given in \eqref{Phiij}, as follows
\[
\wh{N}_{ij}=-4\Phi_{ij},\qquad \widetilde{\wh{N}}_{ij}=-4\widetilde{\Phi}_{ij}.
\]
Let us recall that the tensors $\wh{N}_{ij}$ and $\Phi_{ij}$ are anti-invariant under twin interchange.

Bearing in mind \eqref{hn=nP}, \eqref{nabla3} and \eqref{Phiij}, we get the components of the invariant connection as follows
\begin{equation}\label{hn-ex}
\begin{array}{l}
D_{X_{1}}X_{1} = -\e D_{X_{1}}X_{2} = \e D_{X_{2}}X_{1} = -D_{X_{2}}X_{2} =
 -\e\lm_{2}X_{1}-\lm_{2}X_{2},\\[4pt]
D_{X_{1}}X_{3} = -\e D_{X_{1}}X_{4} = \e D_{X_{2}}X_{3} = -D_{X_{2}}X_{4}  =
 -\e\lm_{1}X_{1}-\lm_{1}X_{2},\\[4pt]
D_{X_{3}}X_{1} = -\e D_{X_{3}}X_{2} = \e D_{X_{4}}X_{1} = -D_{X_{4}}X_{2} =
 -\e\lm_{2}X_{3}-\lm_{2}X_{4},\\[4pt]
D_{X_{3}}X_{3} = -\e D_{X_{3}}X_{4} = \e D_{X_{4}}X_{3} = -D_{X_{4}}X_{4}  =
 -\e\lm_{1}X_{3}-\lm_{1}X_{4}.
\end{array}
\end{equation}

Taking into account \eqref{wRbRPhi} and the vanishing of $B$, we obtain that the invariant ten\-sors $K$ and $A$ for $(L,P,g)$ coincide. Similarly, we find the corresponding property for $(L,P,\tg)$. So, we have the equalities 
\[
K=A,\qquad \widetilde{K}=\widetilde{A}.
\]
A way to check the latter equalities is a direct computation of the basic components $K_{ijk}=K(X_i,X_j)X_k$ of $K$ from \eqref{hn-ex} as the curvature tensor of the invariant connection $D$.


\end{document}